\begin{document}
\date{ }
\theoremstyle{plain}
\newtheorem{thm}{\bf Theorem}[section]
\newtheorem{lem}[thm]{\bf Lemma}
\newtheorem{cor}[thm]{\bf Corollary}
\newtheorem{prop}[thm]{\bf Proposition}
\theoremstyle{remark}
\newtheorem{Case}{\bf Case}
\newtheorem{rem}[thm]{\bf Remark}
\newtheorem{claim}[thm]{\bf Claim}

\theoremstyle{definition}
\newtheorem{Def}{\bf Definition}
\newtheorem*{pf}{\bf Proof}
\newtheorem{Conj}{\bf Conjecture}

\newcommand{\nc}{\newcommand}
\newcommand{\rc}{\renewcommand}

\newcommand{\ca}{{\mathcal A}}
\newcommand{\BB}{{\mathcal B}} 
\newcommand{\CC}{{\mathcal C}}
\newcommand{\DD}{{\mathcal D}}
\newcommand{\EE}{{\mathcal E}}
\newcommand{\FF}{{\mathcal F}}
\newcommand{\GG}{{\mathcal G}}
\newcommand{\HH}{{\mathcal H}}
\newcommand{\II}{{\mathcal I}}
\newcommand{\JJ}{{\mathcal J}}
\newcommand{\KK}{{\mathcal K}}
\newcommand{\LL}{{\mathcal L}}
\newcommand{\MM}{{\mathcal M}}
\newcommand{\NN}{{\mathcal N}}
\newcommand{\OO}{{\mathcal O}}
\newcommand{\PP}{{\mathcal P}}
\newcommand{\QQ}{{\mathcal Q}}
\newcommand{\RR}{{\mathcal R}}
\newcommand{\TT}{{\mathcal T}}
\newcommand{\UU}{{\mathcal U}}
\newcommand{\VV}{{\mathcal V}}
\newcommand{\WW}{{\mathcal W}}
\newcommand{\ZZ}{{\mathcal Z}}
\newcommand{\XX}{{\mathcal X}}
\newcommand{\YY}{{\mathcal Y}}
\nc{\bba}{{\mathbb A}}
\nc{\bbb}{{\mathbb B}}
\nc{\bbc}{{\mathbb C}}
\nc{\bbd}{{\mathbb D}}
\nc{\bbe}{{\mathbb E}}
\nc{\bbf}{{\mathbb F}}
\nc{\bbg}{{\mathbb G}}
\nc{\bbh}{{\mathbb H}}
\nc{\bbi}{{\mathbb I}}
\nc{\bbj}{{\mathbb J}}
\nc{\bbk}{{\mathbb K}}
\nc{\bbl}{{\mathbb L}}
\nc{\bbm}{{\mathbb M}}
\nc{\bbo}{{\mathbb O}}
\nc{\bbp}{{\mathbb P}}
\nc{\bbq}{{\mathbb Q}}
\nc{\bbr}{{\mathbb R}}
\nc{\bbs}{{\mathbb S}}
\nc{\bb}{{\mathbb T}}
\nc{\bbu}{{\mathbb U}}
\nc{\bbv}{{\mathbb V}}
\nc{\bbw}{{\mathbb W}}
\nc{\bbx}{{\mathbb X}}
\nc{\bby}{{\mathbb Y}}
\nc{\bbz}{{\mathbb Z}}
\nc{\fA}{{\mathfrak A}}
\nc{\fB}{{\mathfrak B}}
\nc{\fC}{{\mathfrak C}}
\nc{\fD}{{\mathfrak D}}
\nc{\fE}{{\mathfrak E}}
\nc{\fF}{{\mathfrak F}}
\nc{\fG}{{\mathfrak G}}
\nc{\fH}{{\mathfrak H}}
\nc{\fI}{{\mathfrak I}}
\nc{\fJ}{{\mathfrak J}}
\nc{\fK}{{\mathfrak K}}
\nc{\fL}{{\mathfrak L}}
\nc{\fM}{{\mathfrak M}}
\nc{\fN}{{\mathfrak N}}
\nc{\fO}{{\mathfrak O}}
\nc{\fP}{{\mathfrak P}}
\nc{\fQ}{{\mathfrak Q}}
\nc{\fR}{{\mathfrak R}}
\nc{\fS}{{\mathfrak S}}
\nc{\fT}{{\mathfrak T}}
\nc{\fU}{{\mathfrak U}}
\nc{\fV}{{\mathfrak V}}
\nc{\fW}{{\mathfrak W}}
\nc{\fZ}{{\mathfrak Z}}
\nc{\fX}{{\mathfrak X}}
\nc{\fY}{{\mathfrak Y}}
\nc{\fa}{{\mathfrak a}}
\nc{\fb}{{\mathfrak b}}
\nc{\fc}{{\mathfrak c}}
\nc{\fd}{{\mathfrak d}}
\nc{\fe}{{\mathfrak e}}
\nc{\ff}{{\mathfrak f}}
\nc{\fh}{{\mathfrak h}}
\nc{\fj}{{\mathfrak j}}
\nc{\fk}{{\mathfrak k}}
\nc{\fl}{{\mathfrak{l}}}
\nc{\fm}{{\mathfrak m}}
\nc{\fn}{{\mathfrak n}}
\nc{\fo}{{\mathfrak o}}
\nc{\fp}{{\mathfrak p}}
\nc{\fq}{{\mathfrak q}}
\nc{\fr}{{\mathfrak r}}
\nc{\fs}{{\mathfrak s}}
\nc{\ft}{{\mathfrak t}}
\nc{\fu}{{\mathfrak u}}
\nc{\fv}{{\mathfrak v}}
\nc{\fw}{{\mathfrak w}}
\nc{\fz}{{\mathfrak z}}
\nc{\fx}{{\mathfrak x}}
\nc{\fy}{{\mathfrak y}}

\nc{\al}{{\alpha }}
\nc{\be}{{\beta }}
\nc{\ga}{{\gamma }}
\nc{\de}{{\delta }}
\nc{\ep}{{\varepsilon }}
\nc{\vap}{{\tepsilon }}

\nc{\ze}{{\zeta }}
\nc{\et}{{\eta }}
\nc{\vth}{{\vartheta }}

\nc{\io}{{\iota }}
\nc{\ka}{{\kappa }}
\nc{\la}{{\lambda }}
\nc{\vpi}{{     \varpi          }}
\nc{\vrho}{{    \varrho         }}
\nc{\si}{{      \sigma          }}
\nc{\ups}{{     \upsilon        }}
\nc{\vphi}{{    \varphi         }}
\nc{\om}{{      \omega          }}

\nc{\Ga}{{\Gamma }}
\nc{\De}{{\Delta }}
\nc{\nab}{{\nabla}}
\nc{\Th}{{\Theta }}
\nc{\La}{{\Lambda }}
\nc{\Si}{{\Sigma }}
\nc{\Ups}{{\Upsilon }}
\nc{\Om}{{\Omega }}

\nc{\zz}{{\mathbb Z}}
\newcommand{\N}{{\mathbb N}}
\newcommand{\etat}{\tilde{\eta}}
\newcommand{\sln}{\mathfrak{sl} _n }
\newcommand{\slnr}{{\mathfrak {sl} _n (\mathbb R)}}
\newcommand{\sun}{\mathfrak{su} _n}
\newcommand{\cc}{{\mathbb C}}
\newcommand{\rr}{{\mathbb R}}
\newcommand{\ac}{{\check{\alpha}}}
\newcommand{\orb}{{\mathcal O}}
\newcommand{\gcs}{{\mathcal{GC}n _{G/K}}}
\newcommand{\cds}{{\mathbb{C} \mathcal{D} _{G/K}}}
\newcommand{\ds}{{\mathcal{D} _{G/K}}}
\newcommand{\ggcs}{{\mathcal{GC} _{G/K} ^G}}
\newcommand{\gcds}{{\mathbb{C} \mathcal{D} _{G/K} ^G} }
\newcommand{\gds}{{\mathcal{D} _{G/K} ^G}}
\newcommand{\bc}{{\check{\beta}}}
\newcommand{\tep}{T_\varepsilon}
\newcommand{\vep}{\varepsilon}
\newcommand{\tepp}{T_{\overline{\varepsilon}}}
\newcommand{\ctsd}{\mathfrak{g} _\mathbb{C} ^*}
\newcommand{\gts}{$\mathfrak{g}\oplus\mathfrak{g}^{\ast}$}
\newcommand{\inv}{^{-1}}
\newcommand{\fg}{{\mathfrak g}}
\newcommand{\noi}{\noindent}
\newcommand{\lra}{{\longrightarrow}}
\newcommand{\pbd}{f^\star \mathcal{D}}
\newcommand{\cep}{{\overline{\varepsilon}}}
\newcommand{\cdt}{\tilde{\mathcal{D}}}
\newcommand{\cts}{{\mathfrak{g} _{\mathbb{C}}}}
\newcommand{\csts}{\mathfrak{k} _{\mathbb{C}}}
\newcommand{\cgts}{\mathfrak{g} _{\mathbb C} \oplus \mathfrak{g} _\mathbb{C} ^* }
\nc{\st}{{\; | \; }}
\nc{\trm}{\textreferencemark}
\nc{\tih}{{\tilde \HH}}
\nc{\vb}{\text{vector bundle}}
\nc{\vbm}{{\VV ect \BB und _M}}
\nc{\vs}{{Vect_{fd}}}
\nc{\cxn}{\text{connection}}
\nc{\bla}{{\mathfrak g ^* \rtimes \mathfrak g}}
\nc{\gc}{\text{generalized complex }}
\nc{\gcstr}{\text{generalized complex structure }}
\nc{\gcstrs}{\text{generalized complex structures }}
\nc{\spic}{\text{strict Picard $\infty$-category }}
\nc{\spics}{ { Pic ^\infty _{strict} }}
\nc{\imo}{{i-1}}
\nc{\imob}{ _\imo}
\nc{\imot}{ ^\imo }
\nc{\cab}{{C^{\leq 0}(\mathcal A b)}}
\nc{\qis}{\text{quasi-isomorphism }}
\nc{\ben}{	\begin{enumerate}\item		}
\nc{\een}{	\end{enumerate}			}
\nc{\bi}{    \begin{itemize}\item    }
\nc{\ei}{    \end{itemize}   }
\rc{\i}{  \item }

\newcommand{\lift}[2]{%
\setlength{\unitlength}{1pt}
\begin{picture}(0,0)(0,0)
\put(0,{#1}){\makebox(0,0)[b]{${#2}$}}
\end{picture}
}
\newcommand{\lowerarrow}[1]{%
\setlength{\unitlength}{0.03\DiagramCellWidth}
\begin{picture}(0,0)(0,0)
\qbezier(-28,-4)(0,-18)(28,-4)
\put(0,-14){\makebox(0,0)[t]{$\scriptstyle {#1}$}}
\put(28.6,-3.7){\vector(2,1){0}}
\end{picture}
}
\newcommand{\upperarrow}[1]{%
\setlength{\unitlength}{0.03\DiagramCellWidth}
\begin{picture}(0,0)(0,0)
\qbezier(-28,11)(0,25)(28,11)
\put(0,21){\makebox(0,0)[b]{$\scriptstyle {#1}$}}
\put(28.6,10.7){\vector(2,-1){0}}
\end{picture}
}

\newdiagramgrid{pentagon}%
   {0.618034,0.618034,1,1,1,1,0.618034,0.618034}%
   {1.17557,1.17557,1.902113,1.902113}

\nc{\dff}{{ \ \df\				}}
\begin{title}{The Quotient of a Category by the Action of a Monoidal Category}
\end{title}
\author{Brett Milburn \footnote{milburn@math.utexas.edu}}

\maketitle

\begin{abstract}
We introduce the notion of the quotient of a category $\CC$ by the action  $A :\MM \times \CC \lra \CC$ of a unital symmetric monoidal category $\MM$.  The quotient $\CC / \MM$ is a 2-category.  We prove its existence and uniqueness by first showing that every small 2-category has a presentation in terms of generators and relations and then describing the generators and relations needed for the quotient $\CC /\MM$.
\end{abstract}

\section{Introduction}
 We show that for any generating set $X$, there is a free 2-category $\ca _X$ on $X$.  Furthermore, given a generating set $X$ with relations $C$, there is a 2-category $\ca _{X,C}$ satisfying a universal property.  Moreover, any small 2-category has a presentation in terms of generators and relations.  We start by defining the weaker notion of a pre-2-category and showing the existence of free pre-2-categories and presentations of pre-2-categories by generators and relations.  We then apply the technology of pre-2-categories via generators and relations to attain the same results for 2-categories.  There are various versions of free n-categories in the literature \cite{gur}, \cite{scp}, \cite{str}, which are suitable in the appropriate contexts.  Schommer-Pries, for instance, considers free symmetric monoidal bicategories. Our interest in presenting 2-categories in terms of generators and relations is due to its utility in taking quotient categories.\\

Given a unital, symmetric monoidal category $\MM$ and an action $A :\MM \times \CC \lra \CC$ of $\MM$ on $\CC$, we would like to explain what it means to take the quotient $\CC / \MM$.  Our definition of the quotient is motivated by a more familiar quotient construction.  Given the action of a monoid $M$ on a space $X$, the smart notion of quotient $X/M$ is not a space but a category.  The objects of $X/M$ are the points of $X$, and morphisms in $X/M$ are indexed by $X \times M$. Instead of identifying points $x$ and $y = m.x$ in $X$ which are related by $m\in M$, there is a morphism $\zeta ^m _x$ from $x$ to $m.x$, thus remembering how $x$ and $y$ are related.  If $\MM$ is a symmetric monoidal category acting on a category $\CC$, we apply the same philosophy.  This time, however, the quotient $Q = \CC /\MM$ is a 2-category.  In addition to the 1-morphisms in $\CC$, objects of $\MM$ provide 1-morphisms $\zeta ^m _x : x \lra m.x$ for $x \in Ob(\CC)$, $m \in Ob(\MM)$.  We require that $\zeta$ is consistent with morphisms in $\CC$ and $\MM$ in a sense described by conditions Q4-Q6 in \S \ref{quotientsection}.  Roughly, consistency of $\zeta$ with morphisms in $\MM$ and $\CC$ means that we require certain diagrams to commute--ones that we would expect to commute in any reasonable definition of quotient.  However, instead of asking these diagrams to commute on the nose, we only require them to commute up to some 2-morphisms.  In section \ref{quotientsection} we define the quotient $\CC / \MM$ and demonstrate its existence and uniqueness up to isomorphism.

\section{2-Categories via Generators and Relations}

\noi We consider in the sequel only small n-categories and will only be concerned with $n$-categories for $n \leq 2$.  In Definition~\ref{def1} we recall the definition of 2-category but also define a weaker notion of pre-2-category, which is like a 2-category in that it has 0-objects, 1-morphism, 2-morphism and compositions but with none of the none of the associativity or coherence properties required of 2-categories.  \\

It is worth noting that we diverge from the standard nomenclature; what we mean by 2-category is what is often called a bicategory.  Additionally, we require morphisms between 2-categories to respect composition on the nose rather than up to 2-morphism.  Definition \ref{def1} follows the point of view of Street \cite{str}.  Instead of viewing an $n$-category as having 0-morphisms (i.e. objects), 1-morphisms, etc. as distinct, any $k$-morphism $x$, is identified with the $(k+1)$-morphism $id_x$.  In this way, all $k$-morphisms are on the same footing as members of the same set.  

\begin{Def}\label{def1}
\begin{enumerate}
\item Suppose that $0\leq n \leq \infty$. The data for a \emph{(small) strict $n$-category} is a set $\ca$ with maps $s_i , t_i : \ca \lra \ca $ for all $i < n$ and maps $*_i : \ca \times _\ca \ca \lra \ca$, where  $\ca \times _\ca \ca $ is the fibered product over maps $s_i : \ca \lra \ca $ and $t_i : \ca \lra \ca$.  Let $\rho _i ,\si _i \in \{s_i , t_i \}$ denote any source or target map. \\
\noi $(\ca , s_i , t_i , *_i )_{i < n}$ is said to be a strict $n$-category if the following 3 conditions are satisfied:
\begin{enumerate} 
\item For all $i <n$, $(\ca , s_i , t_i , *_i)$ is a category.  In other words, 
\begin{enumerate}
\item\label{1a} $\rho _i \sigma _i = \sigma _i$ for all $\si _i $, $\rho _i \in \{s_i , t_i\}$,
\item\label{1b} $a *_i s_i(a) = t_i (a) *_i a = a$,
\item\label{1c} $(a*_i b)*_i c = a *_i (b *_i c)$,
\item\label{1d} $s_i (a*_ib) = s_i b$, and $t_i (a*_i b) = t_i a$.
\end{enumerate}
\item For all $i < j$,$(\ca _i
, \ca _j)$ is a strict 2-category.  That is, for all $\si _i \in \{s_i , t_i \}$ and $\rho _j \in \{s_j, t_j\}$,
\begin{enumerate}
\item\label{2a}$\rho _j \sigma _i = \sigma _i$
\item\label{2b} $\sigma _i \rho _j = \sigma _i$
\item\label{2c} $\rho _j (a *_i b) = \rho _j a *_i \rho _j b$
\item\label{2d} $(a*_jb)*_i(\alpha *_j \beta) = (a*_i\alpha) *_j (b*_i \beta)$ when
one side is defined.  
\end{enumerate}
\end{enumerate}
Strict $n$-categories form a category $nCat _{str}$, the morphisms of which are maps of sets which respect all source, target, and composition maps. 

 
\item We define an \emph{n-categorically graded set} to be any set $S$ together with $s_i , t_i : S \lra S$, $0 \leq  i < n$ for some $0 \leq n \leq \infty$, satisfying 1.a.i, 1.b.i., and 1.b.ii above.  The collection of n-categorically graded sets are the objects of a category $gr_nCat$, the morphisms of which are the functions of sets which preserve the source and target maps in each degree.   
\item The category $p_nCat$ of \emph{pre-n-categories} has as objects $((\ca , s_i , t_i), *_i)$ for $1\leq i \leq n-1$, where $(\ca, s_i , t_i)$ is a an object of $gr_nCat$ together with compositions $*_i$ for $i \leq n-1$ satisfying 1.a.iv and 1.b.iii from above. Morphisms are maps of sets which preserve all structure maps.
\i  For a strict n-category, pre-n-category or n-categorically graded set $\ca$, $\ca _i := s_i \ca = t_i \ca$ is the set of of \emph{i-morphisms} or alternately \emph{i-objects}, which has the structure of an strict-i-category, pre-i-category, or i-categorically graded set, respectively. 
\item A \emph{2-category} is a pre-2-category $\ca$ with 2-isomorhphisms $\al _{h,g,f} :  (h*_0 g)*_0 f  \Longrightarrow  h *_0 (g*_0 f)$ for each $f,g,h \in \ca _1$, whenever the compositions are defined, as well as 2-isomorphisms $\lambda _f : t_0f *_0 f \Longrightarrow f$ and $\rho _f :f*_0 s_0f \Longrightarrow f$ for all 1-morphisms $f \in \ca _1$.  We require $\ca$ to satisfy the conditions described in \cite{gra}, \cite{lei}.  These conditions, which include strict associativity for $*_1$, are called \emph{coherence conditions for 2-categories}.  The collection of 2-categories are the objects of a category $2Cat$, the morphisms of which are morphisms of pre-2-categories which preserve the 2-morphisms $\al$, $\la$, $\rho$.
\end{enumerate}
\end{Def}

There are several useful functors relating the above categories, namely
\bi forgetful functors $2Cat \lra p_2Cat \lra gr_2Cat$ and more generally $p_nCat \lra gr_nCat$
\i full embeddings $gr_nCat \hookrightarrow gr_{(n+1)}Cat \hookrightarrow gr_\infty Cat$ and $p_nCat \hookrightarrow p_{(n+1)}Cat \hookrightarrow p_\infty Cat$ attained by letting $s_i = t_i = id$ for $i \geq n$.
\i a pair of forgetful functors $ p_{(n+1)}Cat \lra p_nCat$, the first of which is given by $\ca \mapsto \ca _n$ and the second forgets the higher structure maps.
\i Composing the previous forgetful functor n times, we get $Ob: p_nCat \lra p_0Cat \simeq Set$, which sends a pre-n-category to its underlying set.  Similarly, $Ob: gr_nCat \lra Set$ sends an n-categorically graded set to its underlying set.
\ei

\begin{Def} We call the morphisms in $2Cat$, $gr_nCat$, and $p_nCat$ \emph{maps} or \emph{functors}.  A map $F : \CC \lra \DD$ in $p_nCat$ or $gr_nCat$ is called \emph{injective} or \emph{surjective} if the underlying map $Ob(f)$ of sets is injective or surjective respectively.  More generally, set-theortic notions such as inclusions, intersections, etc. make sense in $gr_nCat$ and $p_nCat$ by considering the underlying sets.  We say, for instance, that $\CC \in p_nCat$ is a sub-pre-n-category of $\DD \in p_nCat$, written $\CC \subset \DD$, if $Ob(\CC) \subset Ob(\DD)$ and $\CC$ is closed under all $*_k$, $s_k$, $t_k$ in $\DD$.       
\end{Def}

\subsection{Pre-2-Categories}\label{pre2cats}

\noi {\bf Notation :} In $p_2Cat$, we let the symbol $*$ generically denote ``$*_0$ or $*_1$.''  In order to define free pre-2-categories, we will need to have formal strings or words representing composition. With this in mind, we denote a formal string of two objects in the following way.  For $Z \in gr_2Cat$ and subobjects $W,Y$ of $Z$, let $W\bullet Y = \{(w,i,y)\in W \times \mathbb Z /2\mathbb Z \times Y \st s_ix = t_iy \}$ and $W\bullet _j Y = \{(w,i,y)\in W \times \mathbb Z /2\mathbb Z \times Y \st s_ix = t_iy \; \textrm{and} \;  i = j \}$. For $w \in W$, $y \in Y$, let $w\bullet _i y = (w,i,y) \in W\bullet _i Y$, and let $w\bullet y $ generically denote ``$w\bullet _0y$ or $w \bullet _1y$.''

\begin{rem} Another way to view the notation $W \bullet Y$ is as follows.  We can view the correspondences $s_k , t_k : Y \rightrightarrows X$ over $X$ as a monoidal category with product $W \times ^k _X Y$ given by $W\bullet _kY$ defined above. If we were to construct free strict 2-categories from $Y$ over $X$, we would be interested in taking the free associative algebra $\sum _{n\geq 1} Y^{\times n}$, whereas in the construction of free pre-2-categories, we will be describing a refined version the free non-associative algebra $\sum _{n\geq 1} Y^{\times n} \times Tr_n$ of such a correspondence (where $Tr_n$ denotes all trees with n leaves).      
\end{rem}

We now show the existence of free pre-1-categories and pre-2-categories.  We will show the existence of a pre-2-category generated by a 2-categorically graded set $X$, but we would also like to consider the more general situation of generating a pre-2-category from a 1-categorically graded set $X_1$, which generates a free pre-1-category $\CC _{X_1}$ described in Lemma \ref{lem1} and two maps of sets $s_1, \, t_1: X_2 \rightrightarrows \CC _{X_1}$.

\begin{lem}\label{lem1} 
\begin{enumerate}
\item The forgetful functor $p_1Cat \lra gr_1Cat$ has a left adjoint $X \mapsto \CC _X$.  More explicitly, given $X \in gr_1Cat$, there exists an object $\CC _X \in p_1Cat$ with the property that there exists an inclusion $\iota _X : X \hookrightarrow \CC _X$ in $gr_1Cat$ and for any $D\in p_1Cat$ and $F \in Hom_{gr_1Cat}(X , D)$, $F$ factors uniquely through $\CC _X$, i.e. extends uniquely to a map $\tilde F \in Hom_{p_1Cat} (\CC _X ,D)$.  

\item Given the data of $(\CC , s_0 ,t_0 ,*_0) \in p_1Cat$ and a set $X_2$ together with maps of sets $s_1, t_1 :X_2 \rightrightarrows \CC$ such that $\sigma _0 s_1 = \sigma _0 t_1$ for all $\sigma _0 \in \{s_0,t_0\}$, 
\begin{enumerate}
\item The disjoint union $X = X_2 \cup \CC $ is a 2-categorically graded set.
\item  There exists $\FF _X \in p_2Cat$, called the free 2-pre-category on $X$, with the following property.  There is an inclusion $\iota _X : X \lra \FF _X$ in $gr_2Cat$, and if $D \in p_2cat$ and $F :X \lra D $ is a morphism in $gr_2Cat$ such that $F_{| \CC }$ is a map in $p_2Cat$, then $F$ extends uniquely to a map $\tilde F : \FF _X \lra D$ in $p_2Cat$.  
\end{enumerate}
\end{enumerate}
\end{lem}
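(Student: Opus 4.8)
The plan is to treat both assertions as instances of one principle: a pre-$n$-category is an algebra for a typed, \emph{non-associative} signature whose only equational axioms are the identity laws~\ref{1b} and the source/target-of-composite laws~\ref{1d} of Definition~\ref{def1} (associativity~\ref{1c} being deliberately absent), so a free object should be the set of formal bracketed composites---planar binary trees, one for each bracketing, matching the $Tr_n$ of the preceding remark---modulo exactly those identity laws. For Part~1, I would let $X_0 = s_0 X = t_0 X$ be the objects and build $\CC _X$ as $X_0$ together with all planar binary trees whose leaves are non-identity elements $a\in X$ read in composable order ($s_0 a_i = t_0 a_{i+1}$), with $s_0,t_0$ propagated by~\ref{1d} and $*_0$ given by grafting two trees at a new root (defined exactly when the source of the left factor equals the target of the right), extended to $X_0$ by letting objects act as two-sided identities. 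A direct check shows $(\CC _X , s_0,t_0,*_0)$ satisfies~\ref{1a},~\ref{1b},~\ref{1d} with no associativity imposed, hence lies in $p_1Cat$, and the leafwise map $\iota _X : X\hookrightarrow \CC _X$ preserves $s_0,t_0$. The universal property follows by defining $\tilde F$ on a tree recursively by $\tilde F(p*_0 q) = \tilde F(p) *_0 \tilde F(q)$ with $\tilde F|_X = F$; each composite is legal in $D$ because $\tilde F$ preserves $s_0,t_0$ by induction and $F$ does so by hypothesis, and uniqueness is immediate since any $p_1Cat$-map extending $F$ must obey the same recursion. Reading the resulting bijection $Hom_{p_1Cat}(\CC _X , D)\cong Hom_{gr_1Cat}(X,D)$ as natural in $D$ exhibits the left adjoint.

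For Part~2(a) I would set $s_1 = t_1 = \mathrm{id}$ on $\CC$, retain the given $s_0,t_0$ on $\CC$, take $s_1,t_1$ on $X_2$ to be the prescribed maps into $\CC$, and define the missing $0$-source and $0$-target of a generator $\alpha\in X_2$ by $s_0\alpha := s_0 s_1\alpha$ and $t_0\alpha := t_0 s_1\alpha$. The hypothesis $\sigma _0 s_1 = \sigma _0 t_1$ is precisely what makes this independent of the choice of $s_1$ versus $t_1$, and a short case check separating elements of $\CC$ from those of $X_2$ verifies the within-level relations~\ref{1a} at each level and the cross-level relations~\ref{2a},~\ref{2b}, so $X = X_2\cup\CC$ is a $2$-categorically graded set.

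For Part~2(b) the key design decision is that $\FF _X$ must \emph{reuse} the composition already present in $\CC$ rather than freely regenerate it: its $1$-morphisms will be exactly $\CC$ (carrying $\CC$'s own $*_0$), while its genuine $2$-cells are the planar binary trees whose internal nodes are labelled $*_0$ or $*_1$ and whose leaves are elements of $X_2$ or identity $2$-cells $\mathrm{id}_f$ for $f\in\CC$, subject to the typing rules that $\alpha *_1 \beta$ requires $s_1\alpha = t_1\beta$ whereas $\alpha *_0 \beta$ requires $s_0\alpha = t_0\beta$ and has $s_1(\alpha *_0\beta) = s_1\alpha *_0 s_1\beta$ and $t_1(\alpha *_0\beta) = t_1\alpha *_0 t_1\beta$ \emph{computed in $\CC$}. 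I would then take $\FF _X$ to be these trees modulo the congruence generated by the identity laws~\ref{1b} for both $*_0$ and $*_1$ (absorbing identity leaves), equip it with the induced structure maps, and verify the $p_2Cat$ axioms; since $s_1$ of every tree lands in $\CC$ and $\CC$ embeds as identity $2$-cells, one gets $s_1\FF _X = t_1\FF _X = \CC$, confirming no spurious $1$-morphisms are created.

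The main obstacle is the universal property of $\FF _X$. Given $D\in p_2Cat$ and $F:X\lra D$ in $gr_2Cat$ with $F|_\CC$ a map in $p_2Cat$, I extend by the evident recursion $\tilde F(p * q) = \tilde F(p) * \tilde F(q)$. The one step that is not purely formal is well-definedness on horizontal composites: there the $1$-dimensional source and target are computed using $\CC$'s $*_0$, so matching $\tilde F(s_1\alpha *_0 s_1\beta)$ with $s_1\tilde F(\alpha) *_0 s_1\tilde F(\beta)$ inside $D$ demands exactly that $F|_\CC$ respect $*_0$---this is where the hypothesis $F|_\CC \in p_2Cat$ is indispensable, and it is the point I expect to require the most care. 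Compatibility with the identity-law congruence then follows from $D$'s own identity laws, and uniqueness follows because any $p_2Cat$-extension is forced on every tree by the recursion.
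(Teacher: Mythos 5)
Your construction is, in outline, the same as the paper's: formal bracketed (binary‑tree) composites of composable generators, with $s_0,t_0$ propagated by \ref{1d} and composition given by grafting, the graded‑set structure on $X_2\cup\CC$ in Part 2(a) defined exactly as in the paper, the insistence in Part 2(b) that the 1‑morphism level be $\CC$ carrying its own $*_0$ rather than being freely regenerated, and $\tilde F$ defined by the evident recursion with uniqueness forced by that recursion. You also correctly isolate where the hypothesis $F_{|\CC}\in p_2Cat$ is used.

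There is, however, one genuine error: your opening premise that the equational axioms of a pre‑$n$‑category include the identity laws \ref{1b}. By the definition of $p_nCat$ in Definition~\ref{def1}, a pre‑$n$‑category is only required to satisfy 1.a.iv (that is, \ref{1d}) and 1.b.iii (that is, \ref{2c}); neither the unit laws \ref{1b} nor associativity \ref{1c} is imposed. Consequently your step of passing to trees modulo the identity laws --- letting objects act as two‑sided identities in Part 1, and absorbing identity leaves in Part 2(b) --- produces the wrong object: it is free only relative to those pre‑categories that happen to satisfy \ref{1b}, not relative to all of $p_1Cat$ or $p_2Cat$. Concretely, existence of the extension fails for your quotient: in your $\CC_X$ one has $a*_0\iota_X(s_0a)=a$, so well‑definedness of $\tilde F$ would require $F(a)*_0 s_0(F(a))=F(a)$ in $D$, which a general $D\in p_1Cat$ need not satisfy; the same problem occurs with $\mathrm{id}_f*_1\mathrm{id}_f$ in Part 2(b). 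The repair is simply to omit the quotient: the paper's $\CC_X=\bigsqcup_n S_n$ with $S_n=\bigsqcup_p S_p\bullet_0 S_{n-p}$ keeps $a\bullet_0 s_0(a)$ as a new element distinct from $a$, and its $\FF_X$ likewise keeps all formal composites distinct (removing only $\CC\bullet_0\CC$ so as not to duplicate the composition already present in $\CC$). With that change the rest of your argument goes through and coincides with the paper's proof.
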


\begin{proof} 
\begin{enumerate}
\item The pre-category $\CC = \CC _X$ is going to be built out of chains of length $n$ like the path category for $X$ except that $\CC$ keeps track of the order of composition, as we no longer require associativity.  We define chains of length $n$ recurssively by letting $S_1 = X$ and then defining $S_n = \bigsqcup _{1 \leq p \leq n-1} S_p \bullet _0 S_{n-p}$, where $S_p \bullet _0 S_q = \{ (x,y) \in S_p \times S_q \st s_0x = t_0 y \}$. We let $\CC = \bigsqcup _{1\leq n < \infty } S_n$.\\

\noi Define $s_0$, $t_0$ on $S_1$ to agree with the source and target maps already defined on $S_1 = X \in gr_1Cat$.  Now for $x\bullet _0 y \in S_p \bullet _0 S_q$, define $s_0( x\bullet _0 y) = s_0y$ and $t_0 (x\bullet _0 y) = t_0 y$.  Finally, composition on $\CC$ is defined as follows.  For $x \in S_p$, $y \in S_q$ such that $s_0 x = t_0 y$, $x *_0 y := x\bullet _0 y \in S_{p+q}$.  One may easily check that $\CC \in gr_1Cat$ and that conditions \ref{1d} and \ref{2c} of Definition~\ref{def1} so that $\CC \in p_1Cat$.  \\

\noi Given $D \in p_2Cat$ and $F : X \lra D$ in $gr_2Cat$, we would like to extend $F$ to a map $\tilde F : \CC \lra D$ of pre-2-categories.  We must have $\tilde F_{|S_1} = F$.  Now, having defined $\tilde F_{|S_k}$ for $k <n$, if $x\bullet _0 y \in S_p \bullet _0 S_{n-p} \subset S_n$, letting $\tilde F(x\bullet _0 y) = Fx*_0Fy$ defines $\tilde F$ on all of $\CC$, and obviously, in order to respect composition, this is the only possible choice for $\tilde F$.  

\item 
\begin{enumerate}
\item We define $s_i$, $t_i$ so that on $\CC $, $s_0$, $t_0$ agree with the source and target maps for $\CC \in gr_1Cat \subset gr_2Cat$ and $(s_1)_{|\CC} , (t_1) _{|\CC} = id_\CC$.  On $X_2$, we let $s_1$, $t_1:  X_2 \lra \CC $ be the maps specified above, and for $\sigma _0 \in \{s_0 , \, t_0 \}$, we let ${\sigma _0} _{|X_2} = \sigma _0 s _1 : X_2  \lra \CC$ or equivalently $\sigma_0 t_1$.  It is trivial to verify that properties \ref{1a}, \ref{2a}, and \ref{2b} of definition~\ref{def1} are satisfied.  

\item Let $S_1 = X_2 \bigsqcup \CC$, let $S_2 =( S_1 \bullet S_1)\setminus \CC \bullet _0 \CC$, and $S_n = \bigsqcup S_p \bullet S_{n-p}$ for $n > 2$.  Now we define $\FF _X = \bigcup _{1\leq n < \infty } S_n$.  Let $s_0 (x\bullet y) = s_0y$, $t_0 (x\bullet y) = t_0 x$, $s_1 (x \bullet _1 y ) = s_1y$, $t_1(x\bullet _1 y) = t_1 x$, and $\sigma _1 (x\bullet _0 y) = \sigma _1 x *_0 \sigma _1 y$. To see that this composition makes sense, an easy inductive proof shows that $s_1(S_p), \; t_1(S_p) \subset \CC$ for all $p$. Note that $(\FF _X)_1 = \CC$.  With these source and target maps, $\FF _X $ is a 2-categorically graded set.  There are composition laws on $\FF_X$ as follows. 
\begin{displaymath} 
x*_0 y = 
\left\{ 
\begin{array}{ll}
x *_0 y    & \textrm{ if $x,y \in \CC $}\\
x\bullet_0y    & \textrm{otherwise}
\end{array}
\right.
\end{displaymath}
\noi and $x*_1y = x\bullet _1 y$. One can check that $\FF _X \in p_2Cat$.\\   

Suppose $F : X \lra D$ is a map in $gr_2Cat$ such that $F$ restricted to $\CC$ is a map in $p_2Cat$.  Having defined $\tilde F$ on $S_k$ for $k\leq n$, define $\tilde F$ on $S_{n+1}$ by $\tilde F(x \bullet _i y) = \tilde Fx *_i \tilde Fy$ for $i \in \{0,1\}$. Clearly $\tilde F$ is well defined, and  $\tilde F(x*_iy) = \tilde F x *_i \tilde F y$.  Furthermore, as $X \subset \FF _X$ in $gr_2Cat$, it is apparent that $\tilde F$ is the only possible extension of $F$ to a map $\tilde F \in Hom_{p_2Cat}(\FF _X , D)$.  
\end{enumerate}
\end{enumerate}
\end{proof}

\begin{cor} The forgetful functor $p_2Cat \lra gr_2Cat$ is left adjoint to the functor which sends $X \in gr_2Cat$ to $\FF _{(X \setminus X_1) \cup \CC _{X_1}} \in p_2Cat$.
\end{cor}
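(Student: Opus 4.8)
This corollary packages Lemma~\ref{lem1} as an adjunction between the forgetful functor $U : p_2Cat \lra gr_2Cat$ and the functor $G : X \mapsto \FF_{(X\setminus X_1)\cup \CC_{X_1}}$, in exact parallel with part~(1), where the forgetful functor $p_1Cat \lra gr_1Cat$ was shown to admit the left adjoint $X \mapsto \CC_X$. Concretely, the content to be established is a bijection $Hom_{p_2Cat}(GX, D) \cong Hom_{gr_2Cat}(X, UD)$, natural in $X \in gr_2Cat$ and $D \in p_2Cat$, exhibiting $G$ as the left adjoint and $U$ as the right adjoint of the pair. The plan is to prove this in its object-wise, universal-arrow form: for each $X$ I would produce a single arrow $\eta_X : X \lra U(GX)$ in $gr_2Cat$ and show every $\psi : X \lra UD$ factors uniquely through it. This is preferable to building $G$ on morphisms by hand, since the decomposition $X = (X\setminus X_1)\cup X_1$ is awkward to make functorial directly; the universal-arrow criterion instead hands us $G$ on morphisms, the naturality of $\eta$, and the Hom-bijection all at once.

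First I would set up $GX$ and $\eta_X$. Writing $X_1$ for the $1$-skeleton of $X$, part~(1) of the lemma supplies the free pre-$1$-category $\CC_{X_1}$ together with $\iota_{X_1} : X_1 \hookrightarrow \CC_{X_1}$. Post-composing the structure maps $s_1, t_1 : (X\setminus X_1) \rightrightarrows X_1$ with $\iota_{X_1}$ presents $(X\setminus X_1) \cup \CC_{X_1}$ as an instance of the input data of part~(2): the required compatibility $\sigma_0 s_1 = \sigma_0 t_1$ is inherited from the graded-set identities~\ref{2a} and~\ref{2b} holding in $X$. Hence $GX := \FF_{(X\setminus X_1)\cup \CC_{X_1}}$ is defined, with $(GX)_1 = \CC_{X_1}$, and I take $\eta_X$ to be the inclusion of generators, namely $\iota_{X_1}$ on $X_1$ and the tautological inclusion on $X\setminus X_1$; this is a map in $gr_2Cat$ by construction.

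The heart of the argument is the factorisation, which I would carry out in the two stages dictated by the two halves of the lemma. Given $\psi : X \lra UD$, note that $\psi$ preserves $s_1, t_1$, which are identities on $X_1$, so $\psi|_{X_1}$ takes values among the $1$-cells of $D$ and is a $gr_1Cat$-map $X_1 \lra D_1$; part~(1) extends it uniquely to a $p_1Cat$-map $\overline{\psi} : \CC_{X_1} \lra D_1$. I then assemble $F : (X\setminus X_1)\cup \CC_{X_1} \lra D$ equal to $\overline{\psi}$ on $\CC_{X_1}$ and to $\psi$ on $X\setminus X_1$; this is a $gr_2Cat$-map whose restriction to $\CC_{X_1}$ is already a map in $p_2Cat$ (its $2$-structure being trivial), so part~(2) applies and extends $F$ uniquely to a $p_2Cat$-map $\phi : GX \lra D$ with $U(\phi)\,\eta_X = \psi$. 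Uniqueness of $\phi$ chains the two uniqueness clauses: any competitor restricts on $\CC_{X_1}$ to an extension of $\psi|_{X_1}$, hence coincides with $\overline{\psi}$ there by part~(1), and is then forced on the generators $X\setminus X_1$ and on every formal composite by part~(2). With the universal arrows in hand I would invoke the standard universal-arrow criterion to conclude that $G \dashv U$.

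The one genuine obstacle is the restriction hypothesis of part~(2), which insists that the map to $D$ be a pre-$2$-category morphism already on the $1$-skeleton $\CC$; this is precisely what forces the extension to proceed in two stages, freely extending over $\CC_{X_1}$ by part~(1) before the $2$-cells are filled in by part~(2). Everything else is routine: that $\psi|_{X_1}$ lands in $D_1$ is immediate from preservation of $s_1, t_1$, and naturality of the bijection in both variables follows formally from the uniqueness of extensions. It is worth flagging, as in part~(1), that the left adjoint of the pair is $G$ and the right adjoint is $U$; the whole statement amounts to the single natural bijection $Hom_{p_2Cat}(GX, D) \cong Hom_{gr_2Cat}(X, UD)$.
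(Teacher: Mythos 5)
Your proposal is correct and takes essentially the same route as the paper: the same decomposition $(X\setminus X_1)\cup \CC_{X_1}$, the same two-stage extension (Lemma~1(1) to extend over $\CC_{X_1}$, then Lemma~1(2b) to extend over the 2-cells), and the same chaining of the two uniqueness clauses; repackaging the Hom-bijection as a universal arrow $\eta_X : X \lra U(GX)$ is an immaterial difference, though it does make the functoriality/naturality (which the paper leaves implicit) come for free. Your closing flag is also consistent with the paper's actual proof: what is established is $Hom_{p_2Cat}(\FF_{(X\setminus X_1)\cup\CC_{X_1}}, D) \simeq Hom_{gr_2Cat}(X, D)$, i.e.\ the free functor is the \emph{left} adjoint and the forgetful functor the right adjoint, so the corollary's literal wording has the handedness reversed.
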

\begin{proof} This is a special case of Lemma \ref{lem1}. Suppose $X \in gr_2Cat$.  Let $\CC = \CC _{X_1}$ and $X^\prime = (X \setminus X_1) \cup \CC$.  We take $X \setminus X_1$ instead of all of $X$ in order to avoid having redundant 1-morphisms.  By composing $s_1 , t_1 : (X \setminus X_1) \rightrightarrows X_1$ with the inclusion $X_1 \hookrightarrow \CC$ to get maps $(X\setminus X_1)\rightrightarrows \CC$, part 2a of Lemma~\ref{lem1} guarantees that $X^\prime$ is a 2-categorically graded set. \\

Given $D \in p_2Cat$ and a map $X \stackrel {F}\lra D$ in $gr_2Cat$, we aim to give a map $\FF _{X^\prime} \lra D$ in $p_2Cat$ and show that this assignment $Hom_{gr_2Cat} (X,D) \lra Hom_{p_2Cat}(\FF_{X^\prime} , D)$ is an isomorphism. By Lemma~\ref{lem1} part 1, $F_{|X_1} \in Hom_{gr_1Cat}(X_1 , D) \simeq Hom_{p_1Cat}(\CC _{X _1} , D)$.  Here we consider $D$ as a pre-1-category by forgetting the higher structure maps.  Note also that $\CC \in p_1Cat \hookrightarrow p_2Cat$ and $Hom_{p_1Cat}(\CC , D) \simeq Hom_{p_2Cat}(\CC , D)$.  Since we have extended $F $ from $X_1$ to $\CC$, this allows us to extend $F$ uniquely from $X\subset X ^\prime$ to a map $F : X^\prime \lra D$ in $gr_2Cat$ such that $ F_{|\CC} : \CC \lra D$ is a map of pre-2-categories.  By Lemma~\ref{lem1}(2b), $F : X \lra D$ extends uniquely to a map $\tilde F : \FF _{X ^\prime } \lra D$ in $p_2Cat$.  By the uniqueness of the extensions, the map $Hom_{gr_2Cat}(X,D)\lra Hom_{p_2Cat}(\FF_{(X \setminus X_1)\cup \CC _{X_1}} , D)$ is an inclusion.  Since $X \subset X ^\prime \subset \FF _{X^\prime} $ in $gr_2Cat$, every map $G : \FF _{X ^\prime } \lra D$ in $p_2Cat$ is an extension of $G_{|X} : X \lra G$ in $gr_2Cat$, whence $Hom_{gr_2Cat}(X,D)\simeq Hom_{p_2Cat}(\FF_{(X \setminus X_1)\cup \CC _{X_1}} , D)$.  
\end{proof}

\begin{Def} 
\begin{enumerate}
\item  As in Lemma \ref{lem1}, given the data  $X = (X_1 , X_2 \rightrightarrows \CC _{X_1})$ of $X_1 \in gr_1Cat$ (which defines $(\CC _{X_1} , s_0 ,t_0, *_0) \in p_1Cat$) and a set $X_2$ with maps of sets $s_1, t_1 :X_2 \lra \CC _{X_1}$ such that $\sigma _0 s_1 = \sigma _0 t_1$ for all $\sigma _0 \in \{s_0,t_0\}$, the \emph{pre-2-category generated by $X$} is the free pre-2-category $\FF _{X_2 \cup \CC _{X_1} }$, which by abuse of notation we also denote by $\FF _X$. The data $X$ is the \emph{generating data} for the pre-2-category $\FF _X$. We also write $X = X_1 \cup X_2$ for brevity.

\item A set of \emph{conditions} on generating data $X$ is a binary relation on $\FF _X$. 
\end{enumerate} 
\end{Def}

\begin{lem}\label{firstquotientlem} Given generating data $X$ and conditions $C$, there exists an equivalence relation $\sim$ on $\FF _X$ such that $\FF _X / \sim \,  \in gr_2Cat$ and has the property that for any $D \in p_2Cat$ and $F \in Hom_{p_2Cat} (\FF_X , D) $ such that $xCy $ implies $F(x) = F(y)$ for $x,y \in \FF _X$, $F$ factors through $\FF _X \lra \FF _X / \sim$ in $gr_2Cat$.  
\end{lem}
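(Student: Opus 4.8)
The plan is to build $\sim$ as the smallest equivalence relation on $\FF_X$ that simultaneously contains $C$ and is compatible with every grading map, and then to check the universal property by comparing $\sim$ against the kernel pair of $F$. Call a relation $R$ on $\FF_X$ \emph{compatible} if $xRy$ implies $\rho_i(x)\,R\,\rho_i(y)$ for every $\rho_i \in \{s_0,t_0,s_1,t_1\}$. The collection of compatible equivalence relations on $\FF_X$ that contain $C$ is nonempty, since the total relation $\FF_X \times \FF_X$ belongs to it, so I would define $\sim$ to be the intersection of all members of this collection. An arbitrary intersection of equivalence relations is again an equivalence relation, and the intersection of compatible relations is compatible: if $x \sim y$ then $xRy$ holds in every member $R$, whence $\rho_i(x)\,R\,\rho_i(y)$ in every $R$, so $\rho_i(x) \sim \rho_i(y)$. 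Thus $\sim$ is the least compatible equivalence relation containing $C$.

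Next I would verify $\FF_X/\sim \in gr_2Cat$. Compatibility means each $\rho_i$ descends to $\bar\rho_i : \FF_X/\sim \lra \FF_X/\sim$ with $\bar\rho_i[x] = [\rho_i x]$, and the quotient map $\pi : \FF_X \lra \FF_X/\sim$ preserves all source and target maps by construction. The defining axioms \ref{1a}, \ref{2a}, \ref{2b} of a 2-categorically graded set are identities among composites of $s_i, t_i$; since they already hold in $\FF_X \in p_2Cat \subset gr_2Cat$, they descend termwise, e.g. $\bar\rho_i \bar\sigma_i[x] = [\rho_i \sigma_i x] = [\sigma_i x] = \bar\sigma_i[x]$. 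Hence $\FF_X/\sim$ is a 2-categorically graded set and $\pi$ is a morphism in $gr_2Cat$.

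For the universal property, given $D \in p_2Cat$ and $F \in Hom_{p_2Cat}(\FF_X, D)$ with $xCy \Rightarrow F(x) = F(y)$, I would examine the kernel pair $R_F = \{(x,y) : F(x) = F(y)\}$. This is an equivalence relation containing $C$, and it is compatible because $F$, being a morphism, commutes with every $\rho_i$: if $F(x) = F(y)$ then $F(\rho_i x) = \rho_i F(x) = \rho_i F(y) = F(\rho_i y)$. By minimality $\sim\, \subseteq R_F$, so $x \sim y$ implies $F(x) = F(y)$, and $F$ factors through $\pi$ as a set map $\bar F : \FF_X/\sim \lra D$. The same relation $F\rho_i = \rho_i F$ shows $\bar F$ preserves source and target, so $\bar F$ is a morphism in $gr_2Cat$, unique because $\pi$ is surjective.

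The point that requires the most care is the tension between two demands on $\sim$: it must be large enough to be a compatible equivalence relation, so that $\FF_X/\sim$ is genuinely a graded set, yet small enough that every admissible $F$ still factors through it. Choosing $\sim$ minimal resolves this precisely because each kernel pair $R_F$ is itself a compatible equivalence relation containing $C$, which forces $\sim\, \subseteq R_F$; in other words, the closure operations used to build $\sim$ never identify more than $F$ already identifies. Beyond this observation there is no real obstacle, since the descent of the graded-set axioms is automatic once $\sim$ is compatible.
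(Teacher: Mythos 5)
Your construction and verification are correct, and the overall strategy coincides with the paper's: take the least equivalence relation containing $C$ and stable under the source and target maps (obtained by intersecting the nonempty family of such relations), check that the graded-set axioms descend, and then factor any admissible $F$ by observing that its kernel pair belongs to the family, so that minimality forces $\sim\,\subseteq R_F$. The one substantive difference is that the paper's relation is required to satisfy an additional closure condition (its P3): if $x\sim x'$ and $y\sim y'$, then $x\bullet y\sim x'\bullet y'$ whenever both formal composites are defined. Omitting this does not affect the lemma as stated --- your finer relation still yields a quotient in $gr_2Cat$, and the factorization goes through because the kernel pair of a morphism in $p_2Cat$ satisfies the composition-closure condition as well, so minimality applies either way --- but the specific $\sim$ produced here is reused in the proof of Theorem \ref{thm1}, where one restricts it to $(\FF_X)_1$ and defines a composition on the quotient by choosing representatives; there the closure under $\bullet$ is exactly what makes that induced composition well defined. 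If your $\sim$ is meant to feed into that later construction, you should add closure under the composition operations to your list of defining conditions; as a standalone proof of the present lemma, your argument is complete.
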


\begin{proof} Let $\sim$ denote the finest relation on $ \FF _X$ satisfying the following conditions:

\begin{displaymath}
\begin{array}{lll}
\textrm{P0:  $\sim$ is an equivalence relation.} \\

\textrm{P1: If $xCy$, then $x\sim y$.}\\

\textrm{P2: If $x\sim y$, then $\sigma _i x \sim \sigma _iy$ for $\sigma _i \in \{s_0,t_0,s_1,t_1\}$.}\\

\textrm{P3: If $x\sim x^\prime$ and $y \sim y^\prime$, then $x\bullet y \sim x^\prime \bullet y^\prime$ whenever both compositions are defined.}
\end{array}
\end{displaymath}

The notation in P3 is explained at the beginning of \S \ref{pre2cats} and in the proof of \ref{lem1}(2b).  Letting $x \sim y$ for all $x,y \in \FF _X$ is such a relation.  Because P0-P3 are closed under interesctions (i.e. mutual refinements), Zorn's lemma ensures the existence of a finest relation satisfying P0-P3. \\

\noi Now suppose $F : \FF \lra D$ as above.  Then the relation $xRy$ if $Fx = Fy$ satisfies P0-P3.  Thus, $F$ factors through $\FF_X / R \in gr_2Cat$.  Since $\sim$ is the smallest such relation, $\FF \lra \FF / R$ factors through $\FF /\sim$.  Hence, $F$ also factors through $\FF / \sim$.   
\end{proof}

We now show that for any generating set $X$ and conditions $C$, there is a pre-2-category $\FF _{X/C}$ generated by $X$ and satisfying $C$.

\begin{thm}\label{thm1} Given generating data $X = X_1 \cup X_2$ and conditions $C$, there exists a unique $\FF_{X /C} \in p_2Cat$ satisfying:
\ben There is a map $G : \FF _X \lra \FF_{X/C} $ in $p_2Cat$ such that for all $x,y\in \FF _X$, $xCy$ implies $G(x) = G (y)$.
\i $\FF _{X/C} $ is universal among pre-2-categories satisfying the above property in the sense that for any other
 map $F : \FF _X \lra D$ in $p_2Cat$ for which $xCy$ implies $Fx = Fy$ for all $x,y\in \FF _X$, $F$ factors uniquely through $G$ as seen in the diagram in $p_2Cat$
\begin{diagram}
\FF _X     & \rTo^F     & D\\
\dTo^G     & \ruDashto\\
\FF _{X/C} &.
\end{diagram}
\een
\end{thm}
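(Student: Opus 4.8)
The plan is to build $\FF_{X/C}$ in two stages: first impose the relations at the level of graded sets using Lemma~\ref{firstquotientlem}, and then repair the composition, which the first stage destroys. I would begin by applying Lemma~\ref{firstquotientlem} to the data $(X,C)$ to obtain the finest relation $\sim$ on $\FF_X$ together with the quotient $q : \FF_X \lra Z := \FF_X/\!\sim$ in $gr_2Cat$. By condition P3 the compositions of $\FF_X$ descend to a partial operation on $Z$, namely $q(x)\bullet_i q(y) := q(x *_i y)$ whenever $x *_i y$ is defined in $\FF_X$; this is well defined precisely because $\sim$ satisfies P3, and it is compatible with source and target by P2. The essential point — and the reason $Z$ is not yet the answer — is that identifying $0$- and $1$-morphisms creates new composable pairs in $Z$ that are not in the image of $q\times q$, so this partial operation need not be total on the fibered products $Z\times_Z Z$. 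Thus $Z$ lies in $gr_2Cat$ but in general not in $p_2Cat$.

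To repair this, I would regard $Z$ as a graded set and form the free pre-2-category $\FF_Z$ on it (Lemma~\ref{lem1}, in the form of the ensuing corollary), which reinstates \emph{all} composites, including the genuinely new ones. Inside $\FF_Z$ I impose the conditions $C_Z$ recording the old composition, namely the relation generated by $q(x)\bullet_i q(y)\ C_Z\ q(x *_i y)$ for every pair composable in $\FF_X$, and I apply Lemma~\ref{firstquotientlem} once more, now to the generating data $Z$ and conditions $C_Z$, to obtain $p : \FF_Z \lra P := \FF_Z/\!\approx$ in $gr_2Cat$. I then set $\FF_{X/C} := P$ and $G := p \circ \iota_Z \circ q$, where $\iota_Z : Z \hookrightarrow \FF_Z$ is the canonical inclusion. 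By construction the relations $C_Z$ force $G$ to respect composition, so $G$ is a map in $p_2Cat$, and $G$ kills $C$ because $q$ already does.

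The main obstacle is to show that $P$ genuinely lies in $p_2Cat$, i.e. that the descended composition is total on $P\times_P P$. Equivalently, one must prove that $p\times p$ is surjective onto each fibered product, so that every composable pair in $P$ admits a composable representative in the free object $\FF_Z$. This is where the shape of the generators $C_Z$ matters: each generating relation equates a formal composite $q(x)\bullet_i q(y)$ with an element $q(x *_i y)$ having the \emph{same} $0$- and $1$-boundary, so an induction on word length in $\FF_Z$ shows that $\approx$ identifies no two distinct $0$- or $1$-morphisms coming from $Z$ and hence never destroys composability already witnessed in the free $\FF_Z$; totality then follows. Granting this, the pre-2-category conditions \ref{1d} and \ref{2c} for $P$ are routine, since they are equational and descend along the surjection $p$.

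Finally I would verify the universal property. Given $F : \FF_X \lra D$ in $p_2Cat$ with $xCy \Rightarrow Fx = Fy$, Lemma~\ref{firstquotientlem} factors $F$ through $q$ as a map $\bar F : Z \lra D$ in $gr_2Cat$; the adjunction for free pre-2-categories extends $\bar F$ to $\tilde F : \FF_Z \lra D$ in $p_2Cat$; and since $F$ respects composition, $\tilde F$ sends both sides of each relation in $C_Z$ to the same element, so Lemma~\ref{firstquotientlem} factors it as $\hat F : P \lra D$. A short computation using the descended composition on $P$ and a composable lift in $\FF_Z$ shows that $\hat F$ is a map in $p_2Cat$ and that $\hat F \circ G = F$; uniqueness of $\hat F$ holds because $G(\FF_X) = p(Z)$ generates $P$ as a pre-2-category, so any two $p_2Cat$ factorizations agree on a generating set. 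Uniqueness of $\FF_{X/C}$ itself is then the usual initial-object argument: two objects with the two defining properties receive mutually inverse comparison maps commuting with $G$.
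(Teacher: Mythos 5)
Your two-stage plan (quotient in $gr_2Cat$, then freely regenerate composites, then quotient again) is a genuinely different decomposition from the paper's. The paper instead builds the quotient pre-1-category $\CC ^\prime$ explicitly, adjoining a formal composite $\overline x\bullet\overline y$ only when no representatives are already composable, then forms $\FF _{X^\prime}$ and imposes an extra axiom P4 forbidding any further identification of 1-morphisms, proving that a relation satisfying P0--P4 exists by mapping onto an auxiliary pre-2-category $\mathcal S\subset\CC ^\prime\times\CC ^\prime$ in which there is at most one 2-morphism between any two 1-morphisms. Your route is more uniform, but it has a gap at exactly the step you yourself flag as the main obstacle.

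The gap: to restore the old composition you must put $q(x)\bullet_0 q(y)\ C_Z\ q(x*_0 y)$ into the second set of relations, and for 1-morphisms $x,y$ this identifies two \emph{distinct 1-morphisms} of $\FF _Z$ --- a length-two word in the free pre-1-category $\CC _{Z_1}=(\FF _Z)_1$ and a generator. These two elements do \emph{not} have the same 1-boundary (in this formalism each 1-morphism is its own $s_1$ and $t_1$), so your parenthetical justification (``same $0$- and $1$-boundary'') fails in precisely the relevant case, and $\approx$ creates new $*_1$-composable pairs of 2-morphisms in $P$ that are not witnessed by composable pairs in $\FF _Z$: if $s_1 u=q(x)\bullet_0 q(y)$ and $t_1 v=q(x*_0y)$, then $[u]$ and $[v]$ are composable in $P$ while $u\bullet_1 v$ is undefined in $\FF _Z$. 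This is the same failure mode you correctly diagnosed for $Z$ at stage one, recurring at stage two. The claim you offer in its place --- that $\approx$ identifies no two distinct elements \emph{of $Z$} --- even if proved by your induction, does not yield totality of composition, because the 1-morphisms of $\FF _Z$ form all of $\CC _{Z_1}$, not just $Z_1$, and $C_Z$ is designed to collapse $\CC _{Z_1}$. What is actually needed is a normal-form (confluence) argument: every $\approx$-class of 1-morphisms has a unique maximally reduced representative, and every 2-morphism can be pushed by P3 to one whose 1-boundaries are reduced, so composable lifts always exist. That statement is plausible and your construction is likely salvageable, but it is the real content of the theorem; the paper's explicit $\CC ^\prime$, the axiom P4, and the $\mathcal S$ construction are exactly its substitute, and your proof as written omits it.
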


\begin{proof}
First we consider only 0-objects and 1-morphisms to get a quotient category $\CC ^\prime $ from $\CC = \CC _{X_1}$.  The relation $\sim$ on $\FF _X$ of Lemma \ref{firstquotientlem}  restricts to an equivalence relation on $\CC = (\FF _X)_1$.  That is to say, for $x,y \in \CC$, $x \sim y$ in $\CC$ if and only if $x \sim y $ in $\FF _X$.  Additionally, $\overline \CC := \CC /\sim \in gr_2Cat$ because $\sim$ satisfies P2.  Now we define $\CC ^\prime$ by taking $S_1 = \overline \CC$, $S_2 = \{ \overline x \bullet \overline y \st \overline x ,y \in S_1 \;\textrm{and for all}\;  x^\prime \sim x \; , \; y^\prime \sim y, \; x^\prime *_0 y^\prime \; \textrm{is not defined} \}$.  We define $S_n = \bigsqcup _{0<p<n} S_p \bullet S_{n-p} $ for all $n>2$ and $\CC ^\prime = \bigcup _{n= 1}^\infty S_n$.  Define $s_0(x\bullet y ):= s_0y$, $t_0 (x\bullet y ) := t_0 x$. Composition is defined as 

\begin{displaymath} 
\overline x*_0 \overline y = 
\left\{ 
\begin{array}{ll}
\overline {x^\prime  *_0 y^\prime}    & \textrm{ if $x^\prime *_0 y^\prime \in \CC$ is defined for some $\CC \ni x ^\prime \equiv x$, $\CC \ni y ^\prime \equiv y$}\\
\overline x\bullet _0 \overline y =   & \textrm{otherwise}
\end{array}
\right.
\end{displaymath}

\noi so that $\CC ^\prime \in gr_1Cat$.  This composition gives $\CC ^\prime $ the structure of a pre-1-category.  \\

\noi We claim that any map $F: \CC \lra D$ of pre-1-categories such that $xCy$ implies $Fx=Fy$ must factor through $\CC ^\prime$. Such a map $F: \CC \lra D$ must factor through  $\overline \CC \in gr_1Cat$, which can be extended to a map $\tilde F : \CC ^\prime \lra D$ in $p_1Cat$ via $\tilde F (\overline x \bullet _0 \overline y) = F(x) *_0 F ( y)$.\\


 We now have $X_2 \rightrightarrows \CC \lra \CC ^\prime$, making $X^\prime = X_2   \cup \CC^\prime $ a categorically graded set with a map $\CC \cup X_2 \lra \CC ^\prime \cup X_2$ in $gr_2Cat$.  This induces $H: \FF _X \lra \FF _{X ^\prime}$ in $p_2Cat$.  The next step is to identify all remaining 2-morphisms related by $C$.  We therefore want a relation $\sim$ on $\FF _{X ^\prime}$ which is the finest relation satisfying:
\begin{displaymath}
\begin{array}{lll}
\textrm{P0:  $\sim$ is an equivalence relation.} \\

\textrm{P1$^\prime$: If $xCy$ for $x,y \in \FF _X$, then $Hx\sim Hy$.}\\

\textrm{P2: If $x\sim y$, then $\sigma _i x \sim \sigma _iy$ for $\sigma _i \in \{s_0,t_0,s_1,t_1\}$.}\\

\textrm{P3: If $x\sim x^\prime$ and $y \sim y^\prime$, then $x\bullet y \sim x^\prime \bullet y^\prime$ whenever both compositions are defined.}\\
\textrm{P4: If $x,y \in \CC ^\prime \subset \FF _{X^\prime}$, then $x\sim y $ implies $x =y$.}
\end{array}
\end{displaymath}
\noi Suppose there exists such a relation.  Conditions P0-P4 are closed under taking refinements of two such relations.  Zorn's lemma implies that there is a minimal such relation $R$.  Let $\FF _{X / C} := \FF _{X^\prime}/R$.  Properties P2 and P3 guarantee that $\FF _{X/C}$ is a pre-2-category.  We wish to show that $\FF _{X /C}$ has the specified universal property.  To this end, let $F : \FF _X \lra D$ be any map in $p_2Cat$ such that $Fx  = Fy$ whenever $xCy$.  Then $F_{|\CC} : \CC \lra D$ factors uniquely through $\CC ^\prime$ as we have already shown, thus inducing a unique map $F ^\prime :\FF _{X ^\prime} \lra D$ in $p_2Cat$.  Define a relation $Q$ on $\FF _{X^\prime} $ by $xQy$ if $x$ and $y$ lie in the same fiber of $F ^\prime$.  Conditions P0-P3 above are satisfied by $Q$.  Clearly, since $R$ is the finest relations satisfying P0-P4, it is also the finest relations satisfying P0-P3.  Hence, $F^\prime$ factors uniquely through $\FF _{X ^\prime}/Q$, which factors uniquely through $\FF _{X ^\prime} / R$ in $p_2Cat$ via the map $\pi : \FF _{X ^\prime}/R \lra \FF _{X ^\prime}/Q$.  Therefore, $F$ factors uniquely through $\FF _X \lra \FF _{X^\prime} /R$ as desired.  This can be expressed in the following commutative diagram in $p_2Cat$
\begin{diagram}
\FF _{X^\prime}             & \rTo^{F^\prime}     & D\\
\dTo               & \rdTo    & \uTo\\
\FF _{X^\prime}/R  & \rTo_\pi     &\FF _{X^\prime}/Q .
\end{diagram}

\noi It only remains to show that there exists a relation on $\FF _{X ^\prime}$ satisfying P0-P4. In general, let $\ca \in p2cat$ and $\CC = \ca _1$.  Given a subset $\mathcal S \subset \CC \times \CC$ such that:
\bi $\CC \simeq \Delta \CC \subset \mathcal S$,
\i $\sigma _0 \pi _1 = \sigma _0 \pi _2$ on $\mathcal S$ for all $\si _0 \in \{s_0, t_0 \}$,
\i if $(f,g)$,$(h,k) \in \mathcal S$ satisfy $t_0 h =s_0 f$, then $(fh,gk)\in \mathcal S$, and
\i  $(h,g), \; (g,f)\in \mathcal S$ implies $(h,f) \in \mathcal S$,
\ei
 then $\mathcal S$ is a pre-2-category with stucture maps $s _1  = \Delta \pi _1$, $t_1 = \Delta \pi _2$, $s_0 = \Delta s_0 \pi _1$, $t_0 = \Delta t_0 \pi _2$, $(f,h)*_0(g,k) = (f*_0g , h*_0k)$, and $(f,h)*_1(h,k) = (f,k)$. The important point is that $\mathcal S$ has the property that for every $f,g \in \mathcal S _1 \simeq \CC $, there exists at most one 2-morphism from $f$ to $g$.  Now, starting from $\FF _{X ^\prime}$, let 
$\mathcal S = \{ (f,g) \in \CC ^\prime \st \textrm{there exists a 2-morphism} \; \al : f \Longrightarrow g \}$.  Then there is a projection $\pi : \FF _{X ^\prime} \lra  \mathcal S$, and the fibers of $\pi$ determine a relation satisfying P0-P4. 
\end{proof}

\subsection{2-Categories}

In order to apply the previous results to 2-categories, we observe that a 2-category is simply a pre-2-category with extra data and conditions. 

\begin{thm}\label{thm2}Let $X = X_1 \cup X_2$ be generating data and impose conditions $C$.  There exists a unique (up to isomorphism) 2-category $\ca _{X,C}$ equipped with a map $G : \FF _X \lra \ca _{X,C}$ in $p_2Cat$ such that $G(x) = G(y)$ whenever $xCy$ and such that $\ca _{X,C}$ is universal with respect to this property in the following sense.  Given a 2-category $D$ and a map $F : \FF _X \lra D$ in $p_2Cat$ such that for all $x,y\in \FF _X$,  $xCy$ implies $F(x)= F(y)$, $F$ factors uniquely through $G$ in $p_2Cat$ in such a way that the map $H:\ca _{X,C} \lra D$ such that $HG = F$ is a map of 2-categories.  We call $\ca _{X,C} $ the 2-category generated by $X$ with conditions $C$. 
\end{thm}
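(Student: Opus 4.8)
The plan is to lean on the remark preceding the statement, that a 2-category is exactly a pre-2-category equipped with the extra 2-morphisms $\al$, $\la$, $\rho$ subject to the coherence conditions of Definition~\ref{def1}(5). Accordingly I would reduce the theorem to Theorem~\ref{thm1} by enlarging the generating data so that the associators and unitors become formal 2-generators, and enlarging $C$ so that invertibility, naturality, and the coherence conditions (including strict associativity of $*_1$) become relations. The pre-2-category that Theorem~\ref{thm1} produces from this augmented data will be $\ca_{X,C}$, and the universal property among 2-categories will follow from the universal property among pre-2-categories, once one checks that every 2-category $D$ automatically satisfies the augmented relations.

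Concretely, the construction proceeds in two phases. First I run the 1-morphism step of Theorem~\ref{thm1} on $(X,C)$ to obtain the quotient pre-1-category $\CC^\prime$ of 1-morphisms; this must be done first because the new generators are indexed by the 1-morphisms of $\CC^\prime$. Treating $\CC^\prime$ as the fixed 1-skeleton (as permitted by Lemma~\ref{lem1}(2)), I then adjoin 2-generators $A$ consisting of $\al_{h,g,f}$, $\al_{h,g,f}^{-1}$ for each composable triple $h,g,f$ in $\CC^\prime$ and $\la_f$, $\la_f^{-1}$, $\rho_f$, $\rho_f^{-1}$ for each 1-morphism $f$, with $s_1(\al_{h,g,f}) = (h*_0g)*_0 f$, $t_1(\al_{h,g,f}) = h*_0(g*_0 f)$, $s_1(\la_f)= t_0f*_0 f$, $t_1(\la_f)=f$, $s_1(\rho_f)=f*_0 s_0 f$, $t_1(\rho_f)=f$ (and the reversed assignments for the inverses). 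The augmented conditions $\tilde C$ are $C$ together with the invertibility relations $\al *_1 \al^{-1}\sim \mathrm{id}$, $\al^{-1}*_1\al\sim\mathrm{id}$ (and similarly for $\la$, $\rho$), the naturality relations for $\al$, $\la$, $\rho$ against $*_0$, the pentagon and triangle relations, and the strict associativity relations $(x*_1y)*_1z\sim x*_1(y*_1z)$. Applying the second (2-morphism) step of Theorem~\ref{thm1} to this augmented data yields $\ca_{X,C}\in p_2Cat$, and I take $G:\FF_X\lra\ca_{X,C}$ to be the composite of the induced map from $\FF_X$ into the free pre-2-category on $X_2\sqcup A$ over $\CC^\prime$ with the quotient projection.

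For the universal property, let $D$ be a 2-category and $F:\FF_X\lra D$ a map in $p_2Cat$ with $F(x)=F(y)$ whenever $xCy$. Since $F$ respects the 1-morphism part of $C$, it descends along $\CC_{X_1}\lra\CC^\prime$, so I may extend $F$ over the augmented generators: the extension is \emph{forced} to send each $\al_{h,g,f}$, $\la_f$, $\rho_f$ (and its inverse) to the associator or unitor of $D$ at the images of the indexing 1-morphisms, because $D$ is a 2-category. As the structural 2-isomorphisms of $D$ are invertible, natural, and satisfy the pentagon and triangle identities, and as $*_1$ is strictly associative in $D$, this extension respects every relation of $\tilde C$. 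Theorem~\ref{thm1} then supplies a unique $H:\ca_{X,C}\lra D$ in $p_2Cat$ through which $F$ factors as $F=H\circ G$. Finally $H$ sends the associators and unitors of $\ca_{X,C}$ (the images of the generators in $A$) to those of $D$, so $H$ is a map of 2-categories; its uniqueness follows since its values on $A$ were forced and Theorem~\ref{thm1} pins down the rest. Uniqueness of $\ca_{X,C}$ up to isomorphism is then formal.

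The main obstacle is to confirm that $\ca_{X,C}$ is a genuine 2-category and not merely a pre-2-category carrying distinguished 2-morphisms, which amounts to three checks. First, adjoining $A$ and $\tilde C$ must not collapse the 1-morphisms below $\CC^\prime$: every relation in $\tilde C\setminus C$ equates two 2-morphisms that already share a common $s_1$-source and $t_1$-target in $\CC^\prime$, so conditions P0--P4 of Theorem~\ref{thm1} create no new identifications of 1-morphisms and the 1-morphisms of $\ca_{X,C}$ remain exactly $\CC^\prime$. Second, the images of the $\al$, $\la$, $\rho$ must be genuine 2-\emph{isomorphisms}, which is precisely what the invertibility relations force in the quotient. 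Third, and most delicately, \emph{every} coherence diagram required by Definition~\ref{def1}(5) must commute, not only the imposed pentagon and triangle; this is exactly the coherence theorem for bicategories, guaranteeing that the finitely many imposed relations generate, under the closure P0--P4, all remaining instances of coherence. With these verified, $\ca_{X,C}$ satisfies Definition~\ref{def1}(5) and the argument is complete.
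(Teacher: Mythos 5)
Your proposal is correct and follows essentially the same route as the paper: augment the generating data with formal associators/unitors and their inverses indexed by the 1-morphisms of $\CC^\prime$, add the invertibility and coherence conditions to $C$, run the machinery of Theorem~\ref{thm1}, and verify the universal property by noting that the extension of $F$ over the new generators is forced to hit the structural 2-isomorphisms of $D$. The only cosmetic difference is that you spell out the coherence relations (pentagon, triangle, naturality, strict associativity of $*_1$) and appeal to the coherence theorem for bicategories, whereas the paper simply imposes the full set of ``coherence conditions for 2-categories'' as relations, making that appeal unnecessary.
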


\begin{proof} This is only a slight modification of the proof of Thorem \ref{thm1} where the generating data is enlarged to contain the structure morphisms $\al _{f,g,h},\la _f , \rho _f$ and we add to $C$ coherence conditions for 2-categories.  We work under the assumption that the generating data $X$ does not already contain the structure 2-morphisms for a 2-category.\\

Beginning with $\FF _{X^\prime}$, the pre-2-category defined in the third paragraph of the proof of Theorem \ref{thm1}, we add to $X^\prime$ 2-morphisms $\lambda _f : {t_0 f} *_0 f \Longrightarrow f$ and  $\rho _f : f *_0 {s_0 f} \Longrightarrow f$ for each $f \in \CC ^\prime = (\FF _{X^\prime} )_1$ as well as a 2-morphism  $\al _{h,g,f} : h*_0 (g*_0f) \Longrightarrow (h*_0 g )*_0 f$ for each triple of $f,g,h$ of 1-morphisms in $\CC ^\prime$. Also we add 2-morphisms $\al _{h,g,f} \inv :(h*_0 g )*_0 f   \Longrightarrow h*_0 (g*_0f)$, $\rho _f \inv : f \Longrightarrow f *_0 {s_0 f} $, and $\la _f \inv : f \Longrightarrow  {t_0 f} *_0 f$ which are going to be the inverses of $\al _{h,g,f}$, $\la _f$, $\rho _f$ respectively in the 2-category $\ca _{X,C}$.  Let $Y = X^\prime \bigsqcup \{ \al _{f,h,g} , \lambda _f , \rho _f , \al _{f,g,h} \inv , \rho _f \inv , \la _f \inv \}_{f,g,h \in \CC ^\prime} $ and $C^\prime = C \cup \{\textrm{coherence conditions for a 2-category}\}\cup I$.  Here $I$ denotes the set of relations $ \{ (\al _{h,g,f} \bullet_1 \al _{h,g,f} \inv , h\bullet_0 (g\bullet_0 f)), (\al _{h,g,f} \inv \bullet_1 \al _{h,g,f}  , (h\bullet_0 g)\bullet_0 f), (\la _f \bullet_1 \la _f \inv , f) , (\la _f \inv \bullet _1 \la _f , t_0 f \bullet _0 f), (\rho _f \bullet _1 \rho _f \inv , f), (\rho _f \inv \bullet _1 \rho _f , f\bullet _0 s_0f)\}$, where we think of the binary relation $C ^\prime $ as a subset of $Ob(\FF _{Y}) \times Ob( \FF _{Y})$. \\

 The inclusion (i.e. injective map) $X ^\prime \hookrightarrow Y$ in $gr_2Cat$ induces an inclusion $\FF _{X^\prime} \hookrightarrow \FF _Y$ in $p_2Cat$. Let $R$ be the relation on $\FF _{X ^\prime}$ (described in the penultimate paragraph of the proof of Theorem \ref{thm1}) such that $\FF _{X/C} = \FF _{X ^\prime } /R $.  Now we let $R ^\prime$ be the finest binary relation on $\FF _Y$ satisfying P0, P2-P4 and having $C^\prime$ and $R$ as refinements.  The existence of a minimal relation $R^\prime$ is proven by the same arguments used in the proof of Theorem \ref{thm1}.  The quotient $\ca _{X,C} := \FF _Y /R^\prime$ is a pre-2-category containing $\FF _{X/C}$ as a subcategory (in the sense that there is an inclusion $\FF _{X/C} \hookrightarrow \FF _Y / R ^\prime$).  The generating data $Y$ contains the extra data needed to make $\FF _{X ^\prime}$ into a 2-category, and the conditions $C^\prime$ are chosen  for the purpose of ensuring that $\FF _Y / R ^\prime$ satisfies the coherence conditions for 2-categories.  \\

More precisely, in order for $\ca _{X,C} $ to be a 2-category, it must contain 2-isomorphism $\al _{f,g,h}$ $\la _f$, and $\rho _f$ for all $ f,g,h \in (\ca _{X,C})_1$, and $\ca _{X,C} $ must satisfy the coherence conditions.  One obstacle to $\ca _{X,C} $ to be a 2-category is that we have not added enough $\al$'s $\rho$'s and $\la $'s.  We have added an $\al _{f,g,h}$ $\rho _f$ and $\la _f$ for all $f,g,h \in \CC ^\prime \subset (\ca _{X,C})_1$, but we need one for each $f,g,h \in (\ca _{X,C})_1$.  This, however, is not a problem since no two 1-morphism are identified in passing from $\FF _Y$ to $\FF _Y / R^\prime$, whence $\CC ^\prime \simeq (\FF _{X ^\prime})_1 = (\FF _Y)_1 \simeq (\FF _Y/R^\prime)_1$.  The other possible obstacle for $\FF _Y/R^\prime$ to qualify as a 2-category is that there may be 2-morphisms in $\FF _Y/R^\prime$ which ought to be identified but which are not, which would mean that the coherence conditions are not satisfied. For example, $*_1$ should be strictly associative.  However, the choice of $R^\prime$ and the fact that $\FF _Y \lra \FF _Y/ R^\prime$ is surjective preclude this from happening.  Therefore, $\FF _Y /R^\prime$ is a 2-category which comes with a map $\FF _X \lra \FF _Y / R^\prime$ in $p_2Cat$.  \\

It only remains to show that $\ca _{X,C}$ has the desired universal property.  Given $D\in 2Cat$ and $F : X \lra D$ in $gr_2Cat$ such that $F : \FF _X \lra D$ identifies objects related by $C$, then $F$ induces a map $F : \FF _{X^\prime} \lra D$ by Theorem \ref{thm1}.  The map $X^\prime \lra D$ in $gr_2Cat $ extends uniquely to a map $Y \lra D$ because there is only one possible choice of where to send each $\al _{f,g,h}$, $\rho _f$, $\lambda _f$, namely the structure maps $\al _{F(f), F(g), F(h)}$, $\lambda _{F(f)}$, $\rho _{F(f)}$ in $D$.  The map from $\FF _Y$ already has the property that $Fx = Fy$ if $xCy$ (Here we abuse notation and denote all maps by $F$). The only additional relations in $C^\prime$ are the coherences conditions for 2-categories.  These relations will automatically become equalities in $D$ because $D$ is a 2-category.  Thus, $xR^\prime y$ implies $Fx = Fy$, whence $F: \FF _Y \lra D$ descends to $\FF _Y / R^\prime  \lra D$ uniquely. This map $\FF _Y / R^\prime \lra D$ preserves the maps $\al $, $\la$, $\rho$, so it is a map of 2-categories.  
\end{proof}

\begin{rem} If the original conditions $C$ are such that no two 1-morphism in $\CC _X$ are identified in $\FF _X$ by the equivalence relation $\sim$ of Lemma \ref{firstquotientlem}, then we may initially include the 2-category data $\al _{f,g,h}$, $\lambda _f$, $\rho_f$ and conditions in the original data and conditions and find that $\FF _{X/C}$ is already a 2-category.  The only obstacle to doing this in general is that there may be morphism in $\CC _X ^\prime$ which were not in $\CC _X$.  
\end{rem}

Theorem \ref{thm2} has the unusual property that it makes reference to pre-2-categories in the description of $\ca _{X,C}$.  The following corollary justifies calling $\ca _{X}$ the 2-category generated by $X$.
  
\begin{cor} Consider any generating data $X= (X_2 \rightrightarrows \CC _{X_1})$.  
\ben $\ca _X$ has the following universal property.  There is a canonical inclusion $\io _A : X \lra \ca _X$ of 2-categorically graded sets, and for any 2-category $\BB$ with an inclusion $\io _B : X \lra \BB$ such that $(\io _B) _{|\CC_{X_1}}$ is a map in $p_2Cat$, there is a unique map of 2-categories $F : \ca _X \lra \BB$ such that $\io _B = F \io _A$.   
\i If $X$ is generating data and $C$ is a binary relation on $\ca _X$, then there exists a 2-category $\ca _{X/ C}$ satisfying:
\ben There is a map of 2-categories $G : \ca _X \lra \ca_{X/C}$ such that $xCy$ implies $Gx = Gy$, and
\i Any other map $F : \ca _X \lra \BB$ of 2-categories such that $xCy$ implies $Fx = Fy$ factors uniquely through $G$.
\een
\i Any 2-category has a presentation in terms of generators and relations, i.e. any $\BB \in 2Cat$ is isomorphic to some $\ca _{X/C}$ for some generating data $X$ and binary relation $C$ on $\ca _X$.
\een
\end{cor}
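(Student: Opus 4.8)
All three parts follow from Theorem~\ref{thm2} together with the free pre-$2$-category adjunction of Lemma~\ref{lem1}. For part (1), I would identify $\ca_X$ with $\ca_{X,C}$ for $C$ the empty relation and then recast the universal property of Theorem~\ref{thm2}, stated there for maps out of $\FF_X$, as a property of maps out of the generating set $X$. Take $\io_A$ to be the composite $X \hookrightarrow \FF_X \lra \ca_X$, the second map being the canonical $G$ of Theorem~\ref{thm2}; this is an inclusion because, with no conditions imposed, the relation built in that proof identifies none of the original generators. Given $\BB\in 2Cat$ and an inclusion $\io_B:X\lra\BB$ whose restriction to $\CC_{X_1}$ lies in $p_2Cat$, Lemma~\ref{lem1}(2b) extends $\io_B$ uniquely to a map $\FF_X\lra\BB$ in $p_2Cat$, and Theorem~\ref{thm2} (empty conditions) factors this uniquely through $G$ as a map of $2$-categories $F:\ca_X\lra\BB$. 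The equation $\io_B=F\io_A$ and the uniqueness of $F$ follow from the corresponding clauses of Theorem~\ref{thm2}.

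For part (2), given a binary relation $C$ on $\ca_X$, the plan is to repeat the construction in the proofs of Theorems~\ref{thm1} and~\ref{thm2}, but starting from a presentation of $\ca_X$ rather than from $\FF_X$. Write $\ca_X=\FF_{\hat X}/R'$, where $\hat X$ is obtained from $X$ by adjoining formal symbols for the structure cells $\al,\la,\rho$ and their inverses, so that there is a surjection $\pi:\FF_{\hat X}\lra\ca_X$, and $R'$ is the relation encoding invertibility and coherence. One then forms the finest relation $R''$ on $\FF_{\hat X}$ refining both $R'$ and the pullback $\pi^{-1}(C)$ and subject to P0--P4 (existence by Zorn's lemma, as before), first collapsing $1$-morphisms as in the first stage of Theorem~\ref{thm1} and then the $2$-morphisms; because $\hat X$ already contains the structure cells, none are re-adjoined. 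Setting $\ca_{X/C}:=\FF_{\hat X}/R''$ gives a surjective quotient $G':\ca_X\lra\ca_{X/C}$ that coequalizes $C$, and its universal property is verified as in Theorem~\ref{thm2}, using that $\pi$ is epimorphic for maps of $2$-categories (the uniqueness clause of that theorem). The one point needing care is that $C$ may identify $1$-morphisms, in which case the first stage introduces new composite $1$-morphisms exactly as flagged in the Remark following Theorem~\ref{thm2}.

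Part (3) I would prove as a first-isomorphism theorem for the canonical surjection onto $\BB$. Take $X_1:=\BB_1$ with its source and target maps, and $X_2:=$ the $2$-cells of $\BB$ that are neither identities nor structure cells, with $s_1,t_1$ inherited from $\BB$; the evaluation map $\CC_{X_1}\lra\BB_1$ sending a formal non-associative composite to its value in $\BB$ is a $p_2Cat$ morphism, so part (1) produces a map $\Phi:\ca_X\lra\BB$ of $2$-categories. This $\Phi$ is surjective: objects and $1$-morphisms of $\BB$ are already generators, non-structure $2$-cells lie in $X_2$, and each coherence isomorphism of $\BB$ is the image of the corresponding freely-adjoined $\al,\la,\rho$. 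Letting $C$ be the kernel pair $x\,C\,y\iff\Phi x=\Phi y$ on $\ca_X$, part (2) supplies $G':\ca_X\lra\ca_{X/C}$ and a factorization $\bar\Phi:\ca_{X/C}\lra\BB$ with $\bar\Phi G'=\Phi$. Surjectivity of $\bar\Phi$ is inherited from $\Phi$; for injectivity, given $\bar\Phi u=\bar\Phi v$ write $u=G'x$, $v=G'y$ using surjectivity of $G'$, so $\Phi x=\Phi y$, hence $x\,C\,y$, hence $u=G'x=G'y=v$. A bijective map in $2Cat$ is an isomorphism, its set-inverse automatically preserving all structure, so $\bar\Phi$ is an isomorphism and $\BB\cong\ca_{X/C}$.

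The step I expect to be the main obstacle is the bookkeeping underlying surjectivity of $\Phi$ and the identification of $C$ as the exact kernel pair: one must be certain that the structure cells freely adjoined in forming $\ca_X$ map precisely onto the genuine coherence isomorphisms of $\BB$, and that quotienting by $C$ neither over- nor under-identifies cells. Since the evaluation map $\CC_{X_1}\lra\BB_1$ is far from injective (a length-one generator and the formal composite naming the same $1$-morphism of $\BB$ already collide), $C$ genuinely collapses $1$-morphisms, so part (2) must be available in full generality, including the creation of new composite $1$-morphisms; verifying that this full-strength quotient reproduces $\BB$ exactly is the crux.
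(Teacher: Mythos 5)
Your proposal is correct and follows essentially the same route as the paper: part (1) via Lemma~\ref{lem1} and Theorem~\ref{thm2} with empty conditions, part (2) by pulling $C$ back along the surjection onto $\ca_X$ from the free pre-2-category on $X$ enlarged by the structure cells and re-running the generators-and-relations machinery, and part (3) by taking the kernel relation of the canonical surjection $\ca_X \lra \BB$. You supply more detail than the paper does, in particular the surjectivity/injectivity verification of $\bar\Phi$ in part (3), which the paper asserts in one line.
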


\begin{proof}
\ben By Lemma \ref{lem1}, to have such a map $\io _B$ is the same as having a map $\FF _X \lra \BB$ in $p_2Cat$. The result now follows directly from Theorem \ref{thm2}.  
\i Let $Y = X\sqcup \{\al _{f,g,h} , \lambda _f , \rho _f \}_{f,g,h \in (\ca _X)_1}$.  We have $\FF _X \hookrightarrow \FF _Y \stackrel{\pi}\lra \ca _X \lra \ca _{Y , \pi \inv C}$.  Since $F: \ca _X \lra \BB$ identifies objects related by $C$, $F \pi$ identifies objects related by $\pi \inv C$, whence $F\pi$ factors uniquely through $\ca _{X/C}:= \ca _{Y, \pi \inv C}$ via some map $H : \ca _{X/C} \lra \BB$ of pre-2-categories.  Since $\pi $ is surjective, the composition $\ca _X \lra \ca _{X/C} \stackrel{H}\lra \BB$ is $F$.  Note that $\pi \inv C$ contains the coherence conditions for a 2-category, so $\ca _{X/C}$ is a 2-category.   
\i Suppose $\BB$ is a 2-category.  Let $X = \BB \in gr_2Cat$, so $\ca _X \stackrel{p}\lra \BB$ is a surjection.  Let $C$ be the binary relation on $\ca _X$ which relates every two points in the same fiber of $p$.  Then $\ca _{X/C} \simeq \BB$.
\een
\end{proof}

Theorems~\ref{thm1}, \ref{thm2} can be extended to strict 2-categories.  There is more than one approach to extending these results.  This can be done by modifying the proofs to get a strict 2-category given by generators and relations. At the first stage, the construction of the free pre-1-category $\CC _X$ is replaced by the free 1-category, i.e. the path category generated by $X$.  The free strict 2-category $\FF _X$ can be constructed in a similary way.  Alternately, we can view a strict 2-category as a pre-2-category with extra conditions. We can observe that any 2-category is equivalent to a strict 2-category and get a weaker version of \ref{thm2}, or follow the approach in \cite{str} to prove the existence of a free $\om$-category on a set.

\section{The Quotient of a Category by the Action of a Monoidal Category}\label{quotientsection}

\noi For an action of a symmetric monoidal category $\MM$, on a category $\CC$, we define the notion of a quotient $Q = \CC /\MM$, which is a 2-category, and show that such a quotient always exists and is unique up to isomorphism.  

\begin{Def} A monoidal category $(\MM,\otimes,\beta,l,r)$
consists of 
a category $\MM$, a functor
$\otimes:\MM \times \MM \lra \MM$,
an object $1\in Ob(\MM)$
and three isomorphisms of functors
$$
\beta_{a,b,c}:\ (a\otimes b)\otimes c\to a\otimes (b\otimes c),\ \ \
1\otimes a \stackrel{l_a}{\lra}a \stackrel{r_a}{\longleftarrow} a\otimes 1
;$$
that satisfy
\begin{itemize}\item
(AA) consistency (i.e., self-compatibility)
of associativity
called {\em pentagram identity}
$$
\CD
((ab)c)d
@>\beta_{ab,c,d}>>
(ab)(cd)
@>\beta_{a,b,cd}>>
a(b(cd))
\\
@V{=}VV
@.
@A{1_a\otimes \beta_{b,c,d}}AA
\\
((ab)c)d
@>\beta_{a,b,c}\otimes 1_d>>
(a(bc))d
@>\beta_{a,bc,d}>>
a((bc)d)
\endCD
$$
\i
(AU) compatibility
of associativity and unital constraints:
$$
\CD
a\otimes 1\otimes b
@>\beta_{a,1,b}>>
a\otimes 1\otimes b
\\
@V{r_a\otimes 1_b}VV
@V{1_a\otimes l_b}VV
\\
a\otimes b
@>=>>
a\otimes b
\endCD
$$
\end{itemize}
\end{Def}

\begin{Def} Let $\MM$ be a small symmetric monoidal category and $\CC$ a small 1-category.  

\begin{enumerate} 

\item An action of a  monoidal category $(\MM,\otimes,\beta,l,r)$
on a category $\CC$ consists of 
a functor
$A :\MM\times\CC \lra \CC$ (also denoted $ A :(m,a)\mapsto m.a)$,
an object 
and two isomorphisms of functors
$$
\beta^\ast_{m,n,a}:\ (m\otimes n). a\to m. (n. a),\ \ \
1. a \stackrel{u_m}{\lra} a
;$$
that satisfy
\begin{itemize}
\item
(AA) compatibility)
of two associativity constraints
(again a pentagram identity),
$$
\CD
((lm)n)a
@>\beta^\ast_{lm,n,a}>>
(lm)(na)
@>\beta^\ast_{l,m,na}>>
l(m(na))
\\
@V{=}VV
@.
@A{1_l\otimes \beta^\ast_{m,n,a}}AA
\\
((lm)n)a
@>\beta_{l,m,n}\otimes 1_a>>
(l(mn))a
@>\beta^\ast_{l,mn,a}>>
l((mn)a)
\endCD
$$
\item
(AU) compatibility
of associativity and unital constraints:
$$
\CD
(m\otimes 1). a
@>\beta^\ast_{m,1,a}>>
m\otimes (1. a)
\\
@V{r_m. 1_a}VV
@V{1_a. u_a}VV
\\
m. a
@>=>>
m. a
\endCD
$$
\end{itemize}
\een
\end{Def}

We are now ready to define the quotient of a category $\CC$ by an action of a monoidal category $\MM$, but first we recall from \cite{lei} the defintition of \emph{natural transformation} of functors between 2-categories.  Suppose that $\ca$ is 1-category and $\BB$ is a 2-category.  A natural transformation $ F \implies G$ between two functors $F,G : \CC \lra \DD$ of 2-categories consists of a 1-morphism  $\zeta _x : Fx \lra Gx$ for each object $x \in \ca _0$ and a 2-morphism $\eta _f :\zeta_y *_0 Ff \implies Gf *_0 \zeta _x$ for each 1-morphism $x \stackrel {f}\lra y $ in $\ca$ subject to the following conditions. For all $x \in \ca _0$, 

\begin{equation}\label{funceqn2}
\eta _{x} =  \rho \inv _{\zeta _x} *_1 \lambda _{\zeta _x},
\end{equation}
and $\eta$ is functorial in $\ca$.  This means that for all $x \stackrel {f} \lra y \stackrel {g}\lra z$ in $\ca$, 
\begin{equation}\label{funceqn}
\eta_{gf} = \al \inv _{Gg, Gf, \zeta _x}*_1(Gg *_0 \eta _f)*_1 \al _{Gg , \zeta _y , Ff}*_1(\eta _g *_0 Ff)*_1 \al \inv _{\zeta _z , Fg, Ff}.
\end{equation}


Loosely, this says that the diagram 
\begin{diagram}
Fx               & \rTo^{Ff}   & Fy  & \rTo^{Fg}  & Fz\\
\dTo ^{\zeta _x} & \ldImplies^{\eta _f} & \dTo^{\zeta _y} & \ldImplies ^{\eta _g} & \dTo^{\zeta _z}\\
Gx               & \rTo^{Gf}   & Gy  & \rTo^{Gg}  & Gz
\end{diagram}
coincides with 
\begin{diagram}
Fx               & \rTo^{F(gf)}   & Fz\\
\dTo ^{\zeta _x} & \ldImplies^{\eta _{gf}} & \dTo^{\zeta _z}\\
Gx               & \rTo^{G(gf)}   &  Gz.
\end{diagram}
These diagrams give the rough idea, but since composition in $\BB$ is not strictly associative, the diagrams are ambiguous. The precise statement is given above in equation \eqref{funceqn}.

\begin{Def}\label{quodef}
 A quotient $\CC /\MM $ of an action of $\MM$ on $\CC$  consists of a tiple $(Q, \pi , \theta)$, where $Q$ is a 2-category, $\pi : \CC \lra Q$,  and $\theta : \pi \circ p _2 \Longrightarrow \pi \circ A $ is a natural transformation in 2-Cat, where $\pi p_2$ and $ \pi A : \MM \times \CC \lra Q$.  We ask that for any other such $(Q^\prime , \pi ^\prime , \theta ^\prime )$, $\pi ^\prime $ factors uniquely through $\pi$ via some map $F$ such that $F \theta = \theta ^\prime$. 
\end{Def}

We now offer an explicit description of a quotient $(Q, \pi , \theta )$.  Letting $\theta = (\eta , \zeta)$, the quotient $(Q,\pi , \eta)$ is given by Q1-Q7 listed below.  Since $\theta$ is a morphism with  source $\MM \times \CC$, a sufficient condition for functoriality of $\theta$ is that $\eta$ is functorial in $\CC$ and $\MM$ independently (Q3, Q4) and that the $\eta ^f _a$'s are compatible with the $\eta ^m _x$'s (Q6).  To see this, observe that any 1-morphism $(f,x) \in \MM \times \CC$ can be decomposed as $(f,1)*_0 (1,x)$ or $(1,x)*_0(1,f)$.  Hence, $\theta$ is determined by its values on morphisms of the form $(f,1)$ and $(1,x)$.  To be functorial, $\theta$ must be functorial in each direction and take the same value on both possible decompositions of $(f,x)$. \\

A quotient $(Q, \pi , \theta)$ of $\CC $ by $\MM$ is equivalent to the following data and conditions.
\begin{itemize}
\item (Q1) a 2-category $Q$ together with a functor $\pi : \CC \lra Q$.

\item (Q2) 1-morphisms $\zeta ^m _a : \pi (a) \lra \pi (m.a)$ in $Q$ for each $a \in \CC _0$, $m \in \MM _0$.  

\item (Q3) 2-morphisms $\eta ^m _x : \pi (x \otimes m) *_0 \zeta ^m _a \Longrightarrow \zeta ^m _b *_0  \pi x$ for each $x \in Hom_\CC (a,b)$, $m \in \MM _0$ such that $\eta$ is functorial in $\CC$.  In other words, $\eta ^m _x$ fits into a diagram
\begin{diagram}
\pi (a)            & \rTo^{\pi (x)}           & \pi (b)\\
\dTo^{\zeta ^m _a} & \ldImplies^{\eta ^m _x}   & \dTo_{\zeta ^m _b}\\
\pi (m.a )         & \rTo_{\pi (x \otimes m)} & \pi (m.b).
\end{diagram}



For $\eta$ to be functorial in $\CC$ means simply that given $a \stackrel{x}{\lra} b \stackrel{y}{\lra} c$ in $\CC$,
 \begin{diagram}
\pi (a)            & \rTo^{\pi (x)}           & \pi (b)            & \rTo^{\pi(y)}           & \pi (c) \\
\dTo^{\zeta ^m _a} & \ldImplies^{\eta ^m _x}  & \dTo_{\zeta ^m _b} & \ldImplies^{\eta ^m _y} & \dTo^{\zeta ^m _c}\\
\pi (m.a )         & \rTo_{\pi (x \otimes m)} & \pi (m.b)          & \rTo_{\pi(y \otimes m)} & \pi (m.c) 
\end{diagram}
coincides with
\begin{diagram}
\pi (a)            & \rTo^{\pi (yx)}           & \pi (c)\\
\dTo^{\zeta ^m _a} & \ldImplies^{\eta ^m _{yx}}   & \dTo_{\zeta ^m _c}\\
\pi (m.a )         & \rTo_{\pi (yx \otimes m)} & \pi (m.c)
\end{diagram}
in the sense of equation \eqref{funceqn}.


\item (Q4) 2-morphisms $\eta ^f _a :  \pi (a \otimes f)  *_0 \zeta ^m _a \Longrightarrow \zeta ^m _a$ for each $f \in Hom_\MM (m,n)$, $a \in Ob(\CC)$ such that $\eta$ is functorial in $\MM$.  In other words, $\eta ^f _a$ fits into a diagram 
\begin{diagram} 
\pi (a ) & \rTo^{=} & \pi (a)\\
\dTo^{\zeta ^m _a} & \ldImplies^{\eta ^f _a} & \dTo_{\zeta ^n _a}\\
\pi (m.a) & \rTo_{\pi (a \otimes f)} & \pi (n.a) 
\end{diagram}
such that for all $l\stackrel{f}{\lra} m \stackrel {g}{\lra} n$ in $\MM$, the diagram
\begin{diagram} 
\pi (a )           & \rTo^{=}                & \pi (a)             & \rTo^{=}                & \pi(a) \\
\dTo^{\zeta ^l _a} & \ldImplies^{\eta ^f _a} & \dTo_{\zeta ^m _a}  & \ldImplies^{\eta ^g _a} & \dTo_{\zeta ^n _a} \\
\pi (l.a) & \rTo_{\pi (a \otimes f)} & \pi (m.a)  & \rTo_{\pi (a \otimes g)} & \pi (n.a)
\end{diagram}
coincides with 
\begin{diagram} 
\pi (a ) & \rTo^{=} & \pi (a)\\
\dTo^{\zeta ^l _a} & \ldImplies^{\eta ^{gf} _a} & \dTo_{\zeta ^n _a}\\
\pi (l.a) & \rTo_{\pi (a \otimes gf)} & \pi (n.a) 
\end{diagram}
in the sense if equation \eqref{funceqn}.

\i (Q5) For $a \in \CC _0$, $m \in \MM _0$, $\eta ^m _a$ of Q3 and Q4 are the same, and equation \eqref{funceqn2} is satisfied.

\i (Q6) The $\eta$'s are compatible in the sense that the following two diagrams of 2-morphisms are ``identical'' in the sense of equation \eqref{funceqn}.

\begin{diagram}
\pi (a )           & \rTo^{=}                 & \pi (a)            & \rTo^{\pi (x)}          & \pi(b)  \\
\dTo^{\zeta ^m _a} & \ldImplies^{\eta ^f _a}  & \dTo_{\zeta ^n _a} & \ldImplies^{\eta ^n _x} & \dTo^{\zeta ^n _b}\\
\pi (m.a)          & \rTo_{\pi (a \otimes f)} & \pi (n.a)          & \rTo_{\pi (x\otimes n)} & \pi (n.b)
\end{diagram}

\begin{diagram}
\pi (a )           & \rTo^{\pi (x)}                 & \pi (b)            & \rTo^{=}          & \pi(b)  \\
\dTo^{\zeta ^m _a} & \ldImplies^{\eta ^m _x}  & \dTo_{\zeta ^m _b} & \ldImplies^{\eta ^f _b} & \dTo^{\zeta ^n _b}\\
\pi (m.a)          & \rTo_{\pi (x \otimes m)} & \pi (m.b)          & \rTo_{\pi (b\otimes f)} & \pi (n.b)
\end{diagram}

\item (Q7) $Q$ is universal with respect to these properties, i.e. for any other 2-category $(\pi ^\prime : \CC \lra Q^\prime, \zeta ^\prime, \eta ^\prime)$ satisfying (Q1)-(Q4), $\pi ^\prime $ factors uniquely through $\pi : \CC \lra Q$.
\ei

As a corollary to Theorem~\ref{thm2}, the existence of a quotient is guaranteed.  

\begin{prop}\label{quoprop} Given a category $\CC$ with an action of a symmetric monoidal category $\MM$, there exists a quotient 2-category $\CC /\MM$, which is unique up to isomorphism.
\end{prop}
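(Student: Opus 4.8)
The plan is to realize $\CC/\MM$ as a presented $2$-category and to read off both its existence and its uniqueness from Theorem~\ref{thm2} together with part~(2) of its Corollary. First I would assemble the generating data $X = X_1 \cup X_2$ dictated by conditions Q1--Q4. For $X_1$ I take as $0$-objects the set $Ob(\CC)$ and as $1$-morphisms the disjoint union of $Mor(\CC)$ (supplying the arrows $\pi(x)$) with one formal arrow $\zeta^m_a$ for every pair $a\in\CC_0$, $m\in\MM_0$, declaring $s_0\zeta^m_a = a$ and $t_0\zeta^m_a = m.a$. For $X_2$ I take generating $2$-morphisms $\eta^m_x$ (one for each $x\in Mor(\CC)$, $m\in\MM_0$) and $\eta^f_a$ (one for each $f\in Mor(\MM)$, $a\in\CC_0$), with $s_1,t_1$ the $1$-morphisms prescribed by the squares in Q3 and Q4. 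This is precisely the raw data that Q1--Q4 demand of any candidate quotient.

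Next I would encode Q3--Q6 as a single binary relation $C$. Because these conditions are phrased ``in the sense of equation~\eqref{funceqn}'' and thus involve the associators $\al$ and the unitors $\la,\rho$, it is cleanest to let $C$ be a relation on the free $2$-category $\ca_X$ furnished by part~(1) of the Corollary to Theorem~\ref{thm2} --- in which these structure $2$-morphisms are genuinely present --- rather than a relation on the bare $\FF_X$. Each instance of Q3, Q4 and Q6 then becomes one pair $(\text{composite}_1,\text{composite}_2)\in\ca_X\times\ca_X$, where the two composites are the pasting composites of the respective diagrams, built from the $\pi(x)$'s, $\zeta$'s, $\eta$'s and the structure $2$-morphisms via $*_0$ and $*_1$ exactly as \eqref{funceqn} specifies; Q5 contributes the pairs identifying the two readings of $\eta^m_a$ and imposing \eqref{funceqn2}. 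I would also adjoin the pairs forcing $\pi$ to respect composition and identities of $\CC$, so that $\pi$ is a genuine functor, and set $\CC/\MM := \ca_{X/C}$.

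Part~(2) of the Corollary then produces a $2$-category $\ca_{X/C}$ with a map $G\colon \ca_X\to\ca_{X/C}$ identifying $C$-related objects and universal among such maps into $2$-categories. I would verify that $(\ca_{X/C},\pi,\theta)$, with $\pi$ the composite $\CC\to\ca_X\to\ca_{X/C}$ and $\theta=(\eta,\zeta)$ the images of the generators, satisfies Q1--Q6: Q1 and Q2 are immediate, and Q3--Q6 hold because $G$ is a $2$-functor that kills $C$, so each defining pair becomes the required equality of pasting composites in $\ca_{X/C}$. For Q7, given any $(Q',\pi',\zeta',\eta')$ satisfying Q1--Q6, the universal property of the free $2$-category $\ca_X$ converts this data into a $2$-functor $\ca_X\to Q'$; since $Q'$ already satisfies Q3--Q6 this functor annihilates $C$ and hence factors uniquely through $G$, yielding the unique $F$ with $F\theta=\theta'$. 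Uniqueness of $\CC/\MM$ up to isomorphism is then the standard consequence of the universal property.

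The sources, targets and insertion of associators and unitors when writing $C$ are routine bookkeeping. The main obstacle is the step I would carry out first conceptually but justify most carefully: showing that Definition~\ref{quodef} --- a quotient as a triple $(Q,\pi,\theta)$ with $\theta$ a natural transformation $\pi p_2\Rightarrow\pi A$ of $2$-functors out of $\MM\times\CC$ --- is genuinely equivalent to the explicit list Q1--Q7. The content is the decomposition argument sketched before the Q-list: every morphism $(f,x)\colon(m,a)\to(n,b)$ of $\MM\times\CC$ factors both as $(f,1_b)*_0(1_m,x)$ and as $(1_n,x)*_0(f,1_a)$, so functoriality of $\theta$ (equation~\eqref{funceqn}) is equivalent to functoriality of $\eta$ separately in the $\CC$- and $\MM$-directions (Q3, Q4) together with the compatibility of the two factorizations (Q6), while \eqref{funceqn2} on objects is exactly Q5. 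Making this reduction precise --- so that a natural transformation $\theta$ is the same datum as a system $(\zeta,\eta)$ satisfying Q2--Q6 --- is where the argument genuinely uses the product structure of $\MM\times\CC$, and it is the crux on which the identification $\CC/\MM=\ca_{X/C}$ ultimately rests.
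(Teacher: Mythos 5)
Your proposal is correct and follows essentially the same route as the paper: both realize $\CC/\MM$ as the $2$-category presented by generators ($\CC$, the $\zeta^m_a$'s, the $\eta$'s) and relations (Q3--Q6 together with the relations forcing $\pi$ to be a functor), and both deduce Q7 and the uniqueness from the universal property of that presentation. The only cosmetic difference is that you impose the relation $C$ on the free $2$-category $\ca_X$ via part (2) of the Corollary, whereas the paper imposes it on $\FF_X$ (including the relations $\al_{f,g,h}=(h*_0g)*_0f$, $\la_f=f$, $\rho_f=f$) and invokes Theorem~\ref{thm2} directly; the two bookkeeping choices are interchangeable.
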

\begin{proof} We let $X $ be the union of the following data:
\begin{enumerate} 
\item  $\CC$
\item  a 1-morphism $\zeta ^m _a : a \lra m.a$ for each $m\in \MM _0$, $a \in \CC _0$
\item  a 2-morphism $\eta ^m _x :  \zeta ^m _b *_0 x \Longrightarrow x\otimes a *_0 \zeta ^m _a$ for each $ a \stackrel{x}\lra b$ in $\CC$ and $m \in \MM _0$
\item  a 2-morphism $\eta ^f _a :  \pi (a \otimes f)  *_0 \zeta ^m _a \Longrightarrow \zeta ^m _a$ for each $f \in Hom_\MM (m,n)$, $a \in \CC _0$
\end{enumerate}



\noi More concretely, let $X _1 = \CC \cup \{\zeta ^m _a \} _{(m,a)\in \MM_0 \times \CC _0}$, let $X_2 $ be the set of all $\eta$'s, and $X = X_1 \cup X_2$.  This generating data produces a free pre-2-category $\FF _X$.  We let $C$ be the conditions described in Q3-Q6 together with the relations needed to make the pre-1-category generated by $Ob(\CC) \subset \FF _X$ into a strict 1-category isomorphic to $\CC$. That is to say, we include the following relations. Let $\circ$ denote composition in $\FF _X$, and $*_0$ denote composition in $\CC$. For each $f$,$g \in \CC$, the relation $f\circ _0g = f*_0g$ is in $C$. Also, $C$ contains the relations $(f\circ_0g)\circ_0 h = f\circ_0(g\circ_0 h)$ for each $f,g,h \in \CC$ for which composition is defined.  The final relations needed are $f\circ _0 s_0 f = f = t_0f \circ _0 f$ as well as $\al _{f,g,h} = (h*_0g)*_0f$, $\la _f = f$, and $\rho _f = f$.\\

\noi With these relations $C$, we attain the 2-category $Q = \ca _{X,C}$. The conditions in $C$ which relate morphisms in $Ob(\CC) \subset \FF _X$ are chosen precisely so that $Ob(\CC) \hookrightarrow \FF _X \lra \ca _{X,C}$ induces a morphism of 2-categories $\pi : \CC \lra \ca _{X,C}$.  Since $\FF _X$ maps to $\ca _{X,C} $, $\ca _{X,C}$ clearly has the 1-morphisms, $\zeta ^m _a$ and 2-morphisms $\eta ^f _a$, $\eta ^m _x$ needed to be a quotient category.  The conditions $C$ were chosen exactly so that the relations described in Q3-Q6 hold in $\ca _{X,C}$.  The universal property of $\ca _{X,C}$ as the 2-category generated by $X$ with relations $C$ implies that the universal property Q7 holds for $\ca _{X,C}$.  The uniqueness of $\ca _{X,C}$ is a consequence of the universal property Q7.  
\end{proof}

\subsection{Variations}

Definition \ref{quodef} gives the quotient as a sort of asymmetrical colimit.  However, the proof of Proposition \ref{quoprop} can be modified slightly to accomodate variations of Definition \ref{quodef}.  For instance, one can attain a more symmetric version of $Q$ with maps $a \lra m.a$ and maps $m.a \lra a$.  This can be accomplished by asking for another natural transformation $\phi : \pi A \implies \pi p_2$ and modifications $id_{\pi p_2}   \Rrightarrow \phi \theta$ and $id_{\pi A} \Rrightarrow \theta \phi$ with inverses.  Alternatively, we could request that $\theta$ and $\phi$ are inverses of each other and get a stricter version.  In another variation of Definition \ref{quodef}, we may also want to include in $Q$ 2-morphisms $\varphi ^{m,n} _a : \zeta ^{mn} _a \Longrightarrow \beta ^* *_0 \zeta ^m _{na} *_0 \zeta ^n _a$ and $\xi ^{l,m,n} _a : \beta ^* *_0 \zeta ^{(lm)n} _a \Longrightarrow \zeta ^{l(mn)} _a$ satisfying a large coherence diagram. This has the effect of demanding that the choice of $\zeta$ is compatible with the tensor product in $\MM$. \\



\end{document}